\documentclass[12pt]{amsart}
\usepackage[totalwidth=480pt, totalheight=640pt]{geometry}
\usepackage{amsmath, amssymb, amsthm, amsfonts, %mathrsfs,
eucal, enumerate, natbib, layout}
\usepackage[normalem]{ulem}
\usepackage[usenames]{color}
\usepackage{verbatim}
\usepackage{soul}
\usepackage{mathtools}
\usepackage{hyperref}
\hypersetup{colorlinks=true,citecolor={red},linkcolor={blue},urlcolor={violet}}
\usepackage[new]{old-arrows}

\newcommand{\details}[1]{}

\newtheorem{theorem}{Theorem}[section]
\newtheorem*{theorem*}{Theorem}
\newtheorem{corollary}[theorem]{Corollary}
\newtheorem*{corollary*}{Corollary}
\newtheorem{lemma}[theorem]{Lemma}
\newtheorem*{lemma*}{Lemma}

\newtheorem*{claim*}{Claim}

\newtheorem{proposition}[theorem]{Proposition}
\newtheorem*{proposition*}{Proposition}

\newtheorem*{conjecture*}{Conjecture}
\newtheorem{def-proposition}[theorem]{Definition-Proposition}
\theoremstyle{definition}

\newtheorem*{definition*}{Definition}
\newtheorem{remark}[theorem]{Remark}
\newtheorem{notation}[theorem]{Notation}
\newtheorem{example}[theorem]{Example}
\newtheorem*{example*}{Example}
\numberwithin{equation}{section}

\newcommand{\rme}{\mathrm {e}}
\newcommand{\rmi}{\mathrm {i}}

\newcommand{\ZZ}{\mathbb{Z}}
\newcommand{\QQ}{\mathbb{Q}}

\newcommand{\CC}{\mathbb{C}}
\newcommand{\GG}{\mathbb{G}}

\newcommand{\Galmot}{{\mathcal{G}}{\mathrm{al}}_{\mathrm{mot}}}

\newcommand{\rank}{\mathrm{rank}}

\newcommand{\UR}{\mathrm{UR}}

\newcommand{\Lie}{\mathrm{Lie}\,}

\newcommand{\tR}{\widetilde{R}}

\newcommand{\oQQ}{\overline{\QQ}}

\newcommand{\cP}{\mathcal{P}}
\newcommand{\cE}{\mathcal{E}}

\newcommand{\e}{\mathrm{e}}
\newcommand{\ii}{\mathrm{i}}

\begin{document}

\title[Dimension of the motivic Galois group of a 1-motive]
{Dimension of the motivic Galois group of a 1-motive}

\author{Cristiana Bertolin}
\address{Dipartimento di Matematica, Universit\`a di Padova, Via Trieste 63, Padova}
\email{cristiana.bertolin@unipd.it}

\subjclass[2020]{18M25, 11J81, 11J89}

\keywords{1-motives, tannakian categories, Weierstrass $\wp\,$, $\zeta$ and $\sigma$ functions, Serre functions, Grothendieck-André period Conjecture}

\date{\today}

%\commby{}

%%% ----------------------------------------------------------------------

\begin{abstract}

We compute the dimension of the motivic Galois group of a 1-motive $M$ defined over $\CC,$ expressing it explicitly in terms of the rank of the multiplicative group generated by the points defining $M.$ As an application, we obtain a new formulation of the Grothendieck--André period Conjecture in the setting of 1-motives.

\end{abstract}

%%% ----------------------------------------------------------------------

\maketitle

%%% ----------------------------------------------------------------------

%\tableofcontents

\section*{Introduction}

Let $\Omega = \ZZ \omega_1 + \ZZ  \omega_2$ be a lattice in $\CC$ with elliptic invariants $g_2,g_3.$
Let $\cE$ be the elliptic curve associated with $\Omega$ and denote by $k$ its endomorphism field.
Associated with the lattice $\Omega$ are the Weierstrass functions $\wp$, $\zeta$ and $\sigma,$ as well as the Serre function 
$f_{q}(z) = \frac{\sigma(z+q)}{\sigma(z)\sigma(q)} \rme^{-\zeta(q) z },$
 where $q$ is a complex number which does not belong to $\Omega.$

 Consider the 1-motive
\begin{equation} \label{GPC-1-motive}
	M=[u:\ZZ \longrightarrow  G^n],  \quad
	u(1)=\big(R_1, \dots, R_n\big)  \in G^n(\CC),
\end{equation}
where $G$ is the extension of the elliptic curve $\cE$ by $\GG_m^r$ parametrized by the points 
$Q_1 =\exp_{\cE^*}(q_1),   \ldots,  Q_{r} =\exp_{\cE^*}(q_r)$\footnote{In the whole text we use small letters for elliptic logarithms of points of $\cE^*(\CC)$ or $\cE(\CC)$ which are written with capital letters.} of the dual elliptic curve $\cE^*,$ that we identify with $\cE$, and 
\begin{align*}
	\nonumber	R_{i}&=\exp_G(p_i,t_{i1}, \dots ,t_{ir})\\
	\nonumber	&=  \sigma(p_i)^3\Big[ \wp(p_i):\wp'(p_i):1 : \rme^{t_{ij}} f_{q_j}(p_i): \rme^{t_{ij}} f_{q_j}(p_i) \Big( \wp(p_i) + \frac{\wp'(p_i)-\wp'(q_j)}{\wp(p_i)-\wp(q_j)} \Big) \Big]_{j=1, \dots, r}
\end{align*}
for $i=1, \dots,n.$	
 By additivity of the category of extensions, the group variety 
 $G$ decomposes as a product $G_{1}\times \dots\times G_{r},$ where $G_j$ is the extension of $\cE$ by $\GG_m$ parametrized by the point $Q_j$ for $j=1, \dots,r.$	
 Accordingly, each point $R_i $ in the fibre $G_{P_i}$ of $G$ above the point $P_i $ decomposes into $r$ components $R_{ij}$ in the fibre $(G_{j})_{P_i} $ where 
 \begin{equation} \label{R}
 	R_{ij}=\exp_{G_j}(p_i,t_{ij})=  \sigma(p_i)^3\Big[ \wp(p_i):\wp'(p_i):1 : \rme^{t_{ij}} f_{q_j}(p_i): \rme^{t_{ij}} f_{q_j}(p_i) \Big( \wp(p_i) + \frac{\wp'(p_i)-\wp'(q_j)}{\wp(p_i)-\wp(q_j)} \Big) \Big].
 \end{equation}
The 1-motive $M$ is uniquely determined by the $2+r+n+rn$ complex numbers 
\begin{equation}\label{Points}
	g_2 \in \CC, \;\; g_3 \in \CC, \;\; q_j \in \CC \setminus \Omega, \;\;p_i \in \CC \setminus \Omega, \;\; t_{ij} \in \CC
\end{equation}
and hence its field of definition is 
\begin{equation}\label{FieldDefinition}
K:=\QQ \big(g_2,g_3,Q_j,R_{ij }\big)_{ j=1, \dots, r \atop i=1, \dots,n }=\QQ \big(g_2,g_3, \wp(q_j),\wp(p_i),e^{t_{ij}}f_{q_j}(p_i) \big)_{ j=1, \dots, r \atop i=1, \dots,n }.
\end{equation}
 Throughout the paper, we assume that the field of definition $K$ of the 1-motive $M$ is algebraically closed.

Let $\Galmot (M)$ denote the motivic Galois group of the 1-motive $M$. The aim of the present paper is to provide a general formula for the dimension of $\Galmot(M)$ in terms of the rank of the multiplicative subgroup of $G(\CC)$ generated by the points $R_{ij}$ \eqref{R} defining $M.$

The motivic Galois group $\Galmot (M)$ fits into 
the following exact sequence 
\begin{equation}\label{eq:shortexactsequenceUR}
	0 \longrightarrow \UR(M) \longrightarrow \Galmot (M) \longrightarrow \Galmot (\cE) \longrightarrow 0
\end{equation} 
where $\UR(M)$ is its unipotent radical and $\Galmot (\cE)$ is the motivic Galois group of $\cE$, i.e. its maximal reductive quotient (see for example \cite[\S 3.1]{BPSS}).
According to \cite[Théorème 0.1]{B03} 
the Lie algebra of $\UR(M)$ is the semi-abelian variety
\begin{equation}\label{eq:ses}
	0 \longrightarrow Z(1) \longrightarrow  \Lie \UR (M)  \longrightarrow B \longrightarrow 0
\end{equation} 
defined by the adjoint action of the Lie algebra $(B,Z(1),[\cdot,\cdot])$ on $B+Z(1),$ where

-  $B$ is the smallest abelian subvariety of $\cE^n \times \cE^{*s}$ generated by the point $(P_1, \dots, P_n,Q_1,\dots , \\ Q_r)$ modulo isogenies, and 

- $Z(1)$ is the smallest subtorus of $\GG_m^{nr}$ which contains 
the torus $Z'(1)$ defined by
the image of the Lie bracket $[\cdot,\cdot]: B \otimes B \to \GG_m^{nr}$ constructed using the motivic Weil pairing of $\cE$ (see \cite[\S 1.3]{B03}), and the torus $Z(1)/Z'(1)$ defined by the point $ \pi (pr_*\tR)$ constructed in \eqref{piprR} using the points $R_{ij}$ \eqref{R}.

In  previous works \cite{B01,BP} we studied how the geometry of the 1-motive $M$ -- namely, existence of endomorphisms and relations between the points \eqref{Points} -- affects the dimension of $\Galmot (M).$ The short exact sequences \eqref{eq:shortexactsequenceUR} and \eqref{eq:ses} reduce the study of $\Galmot(M)$ to that of the pure motives $B, Z'(1)$ and $Z(1)/Z'(1)$ underlying the Lie algebra $\Lie \UR (M).$

The dimension of the abelian variety $B$ decreases when there are relations among the  $P_i$ and $Q_j$ induced by endomorphisms of $\cE^n \times \cE^{*r}.$ More precisely, the dimension of $B$ is equal to the dimension of the sub $k$--vector space of $\CC / (\Omega \otimes_\ZZ \QQ)$ generated by the classes of the elliptic logarithms $ p_1, \dots, p_n, q_1, \dots,q_r $  modulo $\Omega \otimes_\ZZ \QQ.$

To describe the toric part of $\Lie \UR(M),$ we first decompose $M$ into 1-motives $M_{ij}$ with rank-1 lattice and 1-dimensional torus (see Section \ref{decomposition}). Denote by 
\[	\mathrm{NoLB}\]
the subset of $\{1,\dots,n\} \times \{1,\dots,r\}$ consisting of couples $(i,j)$ such that one of the following conditions is satisfied:
\begin{itemize}
	\item  $P_i$ and $Q_j$ are both torsion,
	\item  $P_i$ or $Q_j$ is a torsion point,
	\item  $P_i$ and $Q_j$ are $k$-linearly dependent via an antisymmetric homomorphism, that is $\phi(P_i)=Q_j$ (or $\phi(Q_j)=P_i$) with $\phi+\overline{\phi}=0.$
\end{itemize}
Moreover set 
\[	\mathrm{LB} := \big(\{1,\dots,n\} \times \{1,\dots,r\} \big) \setminus \mathrm{NoLB}.\]
The notation $\mathrm{LB}$ refers to the pairs contributing to the Lie bracket part of  $\Lie \UR(M),$  whereas $\mathrm{NoLB}$ denotes the pairs for which the Lie bracket vanishes.
 According to \cite[Lemma 3.1 and Corollary 4.5]{BP} the fibre $(G_j)_{P_i}$ is canonically isomorphic to $\cE \times \cE^* \times \GG_m$ if and only if 
 $(i,j) \in \mathrm{NoLB}.$ Equivalently, the fibres corresponding to the pairs in $\mathrm{LB}$ are precisely those which are not canonically
 isomorphic to $\cE \times \cE^* \times \GG_m.$

 Since by \cite[Lemma 3.1]{BP} $Z'(1)$ is the smallest subtorus of $\GG_m^{nr}$ which contains the values of the factor of automorphy of the extension $\Lie \UR (M),$ the dimension of $Z'(1)$ involves only the couples $(i,j) \in \mathrm{LB}.$
Notice, however, that the pairs in LB contribute not only to the
torus \(Z'(1)\), but also to the quotient torus \(Z(1)/Z'(1)\). More
precisely, the Lie bracket determines the subtorus \(Z'(1)\), whereas
the comparison of different trivializations lying over the same
LB-fibre gives rise to an additional contribution to
\(Z(1)/Z'(1)\), as illustrated by Example \ref{example}. By contrast, the pairs in NoLB contribute only to
the quotient torus \(Z(1)/Z'(1)\).

In Notation \ref{Notation} we introduce distinguished points \(R_{i_mj_m},R_{i_aj_a},R_{i_lj_l}\) of the 1-motive $M$ \eqref{GPC-1-motive}, each playing a specific
geometric role. The points \(R_{i_mj_m}\) determine the
fundamental non-split fibres of the 1-motive $M$ and account for the torus \(Z'(1)\). 
Accordingly, in  Proposition \ref{dimZ'(1)} we formulate the dimension of \(Z'(1)\) as the rank of
the multiplicative subgroup of \(G(\CC)\) generated by those
points \(R_{i_mj_m}\).
The
points \(R_{i_aj_a}\) measure the independent differences between
trivializations of these fundamental non-split fibres and account for the LB-contribution
to \(Z(1)/Z'(1)\). In Proposition \ref{dimZ(1)/Z'(1)} we show that the LB-contribution to the quotient torus
\(Z(1)/Z'(1)\) is equal to the rank of the multiplicative subgroup of \(G(\CC)\)
generated by the points \(R_{i_aj_a}\).
Finally, the points \(R_{i_lj_l}\) lie on split
fibres and account for the NoLB-contribution to
\(Z(1)/Z'(1)\). Proposition \ref{dimZ(1)/Z'(1)} also shows that this
NoLB-contribution is equal to the rank of the multiplicative
subgroup of \(G(\CC)\) generated by points \(R_{i_lj_l}.\) Finally, in
Theorem \ref{teo:GeneralPoints} we prove that these distinguished points \(R_{i_mj_m}, R_{i_aj_a}, R_{i_lj_l} \)
generate the same
multiplicative subgroup of \(G(\CC)\) as all the points \(R_{ij}\) \eqref{R}
defining the 1-motive $M$. This yields the following description of the dimension of 
 \(\Galmot(M)\) in terms of the rank of the
multiplicative subgroup generated by the points \(R_{ij}\):

\begin{theorem}\label{Teo:dimGal(M)}
	Let $M=[u:\ZZ \rightarrow  G^n   ],
	u(1)=(R_1, \dots, R_n ) \in G^n(\CC),$ be the 1-motive \eqref{GPC-1-motive} defined by the complex numbers $q_j,p_i,t_{ij}$ \eqref{Points}. Then
	\[	\dim \Galmot(M)	= \frac{4}{\dim_\QQ k} +2 \dim_k \langle p_i,q_j \rangle_{i=1, \dots,n \atop j=1, \dots, r} +
	\mathrm{rank}  \langle  R_{ij}\rangle_{i=1, \dots,n \atop j=1, \dots, r}\]
	where
	\begin{itemize}
		\item $\langle p_i,q_j \rangle_{i,j}$ is the $k $--vector subspace  of  $\CC/(\Omega\otimes_\ZZ\QQ)$ generated by the classes of $p_1, \dots, p_n,q_1, \\ \dots, q_r $ modulo $\Omega\otimes_\ZZ \QQ,$
		\item $ \langle R_{ij}\rangle_{ i,j  }  $ is the multiplicative subgroup of $G(\CC)$ generated by the points $R_{ij} \in G(\CC), i=1, \dots,n$ and $j=1, \dots,r.$
	\end{itemize} 
\end{theorem}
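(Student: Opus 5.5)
The proof splits $\dim \Galmot(M)$ using the two short exact sequences \eqref{eq:shortexactsequenceUR} and \eqref{eq:ses}. This gives
\[
\dim \Galmot(M) = \dim \Galmot(\cE) + \dim B + \dim Z'(1) + \dim Z(1)/Z'(1),
\]
where the dimension of the Lie algebra $\Lie\UR(M)$ is counted as a real (equivalently, $\QQ$-) dimension via its weight $-1$ and weight $-2$ pieces, as in \cite{B03}. I then treat the pure parts and the toric part separately.

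For the pure parts I would quote the classical facts. First, $\dim \Galmot(\cE)$ equals $4$ when $k=\QQ$ (it is $\mathrm{GL}_2$) and $2$ in the CM case (a $2$-dimensional torus). This is exactly $4/\dim_\QQ k$. Second, the contribution of $B$ to $\Lie \UR(M)$ is its $\QQ$-dimension as a vector group, namely $2\dim B$. By the description recalled in the introduction, $\dim B = \dim_k\langle p_i,q_j\rangle$ modulo $\Omega\otimes_\ZZ\QQ$. I would justify this by noting that $B$ is isogenous to $\cE^{d}$ with $d$ the $k$-rank of the subgroup generated by the $P_i,Q_j$ in $\cE(\CC)\otimes\QQ$, and that the elliptic logarithm identifies this with the stated $k$-span. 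Together these give the first two terms of the formula.

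For the toric part I would use the distinguished points of Notation \ref{Notation}:
\begin{itemize}
\item Proposition \ref{dimZ'(1)} gives $\dim Z'(1) = \rank\langle R_{i_mj_m}\rangle$.
\item Proposition \ref{dimZ(1)/Z'(1)} gives $\dim Z(1)/Z'(1) = \rank\langle R_{i_aj_a}\rangle + \rank\langle R_{i_lj_l}\rangle$.
\end{itemize}
It then remains to show that
\[
\rank\langle R_{i_mj_m}\rangle + \rank\langle R_{i_aj_a}\rangle + \rank\langle R_{i_lj_l}\rangle = \rank\langle R_{ij}\rangle_{i,j}.
\]
Theorem \ref{teo:GeneralPoints} shows that the three families together generate the same multiplicative subgroup of $G(\CC)$ as all the $R_{ij}$, up to finite index (which does not affect rank). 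So the only additional thing to prove is that the three families are multiplicatively independent of one another, so that the ranks add.

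This independence is the main obstacle. My approach is to project to the base and use the toric fibres. The $R_{i_lj_l}$ lie over split ($\mathrm{NoLB}$) fibres, isomorphic to $\cE\times\cE^*\times\GG_m$. The $R_{i_mj_m}$ lie over the fundamental non-split fibres, and the $R_{i_aj_a}$ are measured as differences of trivializations over those fibres. Suppose a multiplicative relation among the three families holds. Projecting it to $\cE^n\times\cE^{*r}$ and to the Lie bracket torus $Z'(1)$ forces the exponents of the $R_{i_mj_m}$ to vanish, since by construction their images generate a torus of dimension equal to their number. Once those exponents are zero, the remaining relation involves only points whose images land in $Z(1)/Z'(1)$. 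There the LB-part and the NoLB-part sit in distinct coordinate factors of $\GG_m^{nr}$, indexed by disjoint sets of pairs $(i,j)$, so the relation splits into two separate relations. Each of these then has trivial exponents by the rank statements of Proposition \ref{dimZ(1)/Z'(1)}.

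Assembling the pure and toric contributions gives the stated formula.
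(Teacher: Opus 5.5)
Your route is the paper's own: the same reduction to $B$, $Z'(1)$ and $Z(1)/Z'(1)$, and the same three families of distinguished points. The gap is in the last step, where you need $\dim Z(1)=\mathrm{rank}\langle R_{ij}\rangle_{i,j}$. This identity fails whenever a $\mathrm{NoLB}$-pair lies over a non-torsion $P_i$, so no argument can close the step as stated. Take $n=r=1$, $Q_1$ torsion, $P_1$ of infinite order, and $t_{11}$ such that $R_{11}$ maps to $(P_1,1)$ under an isogeny $G_1\to\cE\times\GG_m$; such an isogeny exists because $Q_1$ is torsion. Then $M$ is isogenous to $[\ZZ\to\cE]\oplus[0\to\GG_m]$, so $\dim Z(1)=0$ and $\dim\Galmot(M)=\frac{4}{\dim_\QQ k}+2$ by \eqref{DimSplit}. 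But $R_{11}$ has infinite order because it projects onto $P_1$, so the right-hand side of the statement equals $\frac{4}{\dim_\QQ k}+3$. The underlying issue is that a $\mathrm{NoLB}$-point can have infinite order purely through its abelian coordinate, which is already counted in $2\dim B$. Your argument overlooks this in two places: when it says the remaining points only have ``images in $Z(1)/Z'(1)$'', and when it rewrites Proposition \ref{dimZ(1)/Z'(1)} with $\mathrm{rank}\langle R_{i_lj_l}\rangle$ in place of $\mathrm{rank}\langle \rme^{t_{i_lj_l}}\rangle$. The paper's proof of Theorem \ref{teo:GeneralPoints} has the same blind spot (``the abelian part is already fixed''), so citing Theorem \ref{teo:GeneralPoints} and Proposition \ref{dimZ(1)/Z'(1)} does not close the step either.

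The independence argument you add is also not valid as written.

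\emph{No projection to $Z'(1)$.} Nothing projects the points $R_{ij}$ to $Z'(1)$. That torus is cut out by the Lie bracket on $B$, so it depends only on the $P_i,Q_j$ and not on the trivializations; compare Example \ref{example}, where $R$ and $R'$ determine the same generator. The $\QQ$-independence of $\gamma_1,\dots,\gamma_u$ in $\mathrm{Hom}_\QQ(B,B^*)$ is not a statement about multiplicative independence of points.

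\emph{The base projection does not separate exponents.} Projecting a relation to the base only gives, inside each $G_j$, a $\ZZ$-linear relation among the $P_i$ involved. In Example \ref{example}, $R^cR'^d=1$ only yields $c=-d$, and it is the toric datum $t-t'$ that kills both exponents. So you cannot first make the exponents of the $R_{i_mj_m}$ vanish.

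\emph{The relation does not split along index sets.} A relation lives in $\prod_j G_j(\CC)$, which has one toric coordinate for each $j$, not in $\GG_m^{nr}$. Hence points over a given $Q_j$ need not split along disjoint index sets. For example, take $r=1$, $Q_1$ and $P_1$ of infinite order and $k$-linearly independent, $p_2=2p_1$, and $P_3$ torsion of order $N$. One can choose $t_{21},t_{31}$ so that $\mu:=R_{21}R_{11}^{-2}\in\GG_m(\CC)$ has infinite order, $t_{31}\notin 2\pi\rmi\QQ$, and $R_{31}^N=\mu^N$. Then $R_{11}$, $R_{21}$, $R_{31}$ are distinguished points of the three families, yet $R_{31}^NR_{21}^{-N}R_{11}^{2N}=1$. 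So the claimed mutual independence of the three families is false in general.
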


The main idea of the proof is that the rank of the multiplicative subgroup generated by the points \(R_{ij}\) defining $M$ and the dimension of $ \Lie \UR(M)$ grow simultaneously. More precisely, adding a point which is multiplicatively independent from the previously considered ones enlarges the tannakian category and therefore contributes a new dimension to $\Lie \UR(M)$. Conversely, if a point does not increase the dimension of $\Lie \UR(M)$, then it is multiplicatively dependent on the previously chosen points.

As an application we derive a new formulation of the Grothendieck-André period Conjecture for 1-motives stated in \cite[Corollary 6.6]{Bsubmitted}.

\begin{corollary} 
	Let  $
	M=[u:\ZZ \rightarrow G^n ], u(1) =(R_1, \dots, R_n ) ,$ be the 1-motive \eqref{GPC-1-motive} defined by the complex numbers $p_i, q_j$ and $t_{ij}$ \eqref{Points}. Without loss of generality, we assume that the classes of the elliptic logarithms $p_i , i=1, \dots, n',$ and  $q_j ,j=1, \dots, r',$ form a $k$--basis of the sub $k $--vector space $\langle p_i,q_j \rangle_{i,j}$ of  $\CC/(\Omega\otimes_\ZZ\QQ)$ generated by the classes of $p_1, \dots, p_n,q_1, \dots, q_r .$ 
	 Let 
	 
	 - $\{R_{i_h,j_h}\}_{h=1, \dots, \mathrm{rank}  \langle  R_{ij}\rangle_{  (i,j) \in \mathrm{LB} }}$ be generators of $\langle R_{ij}\rangle_{(i,j)\in \mathrm{LB}}\subseteq G(\mathbb{C})\footnote{By \cite[Corollary 4.5]{BP}, for $i \leqslant n'$ and $j \leqslant r'$ the couple $(i,j) \in \mathrm{LB}$. Moreover, according to \cite[Lemma 6.4]{Bsubmitted}, $ \mathrm{rank}  \langle  R_{ij}\rangle_{  (i,j) \in \mathrm{LB} } \geqslant n'r'$ and so, without loss of generality, we may assume $ i_h =1, \dots, n'$ and  $ j_h =1, \dots, r'$ for $h=1, \dots,n'r'.$} $, and 
	 
	 - $\{\mathrm{e}^{t_{i_l j_l}}\}_{l=1, \dots, \mathrm{rank} \langle \rme^{ t_{ij}}\rangle_{ (i,j) \in \mathrm{NoLB}}}$ be generators of $\langle \mathrm{e}^{t_{ij}}\rangle_{(i,j)\in \mathrm{NoLB}}\subseteq \mathbb{G}_m(\mathbb{C}).$

	\par\noindent The Gro\-then\-dieck-Andr\'{e} period Conjecture applied to $M$ reads 
	\[ \mathrm{t.d.}\,\QQ\big(  g_2,g_3,  \wp(q_j),\wp(p_i), \rme^{ t_{i_hj_h}} f_{q_{j_h}}(p_{i_h}) ,\rme^{ t_{i_lj_l}} , \omega_{1}, \eta_{1},\omega_{2}, \eta_{2},  p_i  ,  \zeta(p_i),  q_j,  \zeta(q_j), t_{i_hj_h}   ,t_{i_lj_l}  \big)_{i,j,h,l}\]
	\[\geqslant	 \frac{4}{\dim_\QQ k} +2 (n'+r')+ \mathrm{rank}  \langle  R_{ij}\rangle_{  (i,j) \in \mathrm{LB} } + \mathrm{rank} \langle \rme^{ t_{ij}}\rangle_{ (i,j) \in \mathrm{NoLB}}.\]	
	If $\QQ(g_2,g_3,Q_j,R_i)_{i,j} \subseteq \oQQ,$ then equality holds.
\end{corollary}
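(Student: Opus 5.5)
The corollary follows by combining Theorem \ref{Teo:dimGal(M)} with the Grothendieck--Andr\'e period Conjecture. That conjecture, in the form proven for 1-motives and recalled in \cite[Corollary 6.6]{Bsubmitted}, states that the transcendence degree over $\QQ$ of the field generated by the periods of $M$ (together with its field of definition) is at least $\dim \Galmot(M)$. Moreover, equality holds when $M$ is defined over $\oQQ$.

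\textbf{Left-hand side.} First I will identify this side with the field of periods of $M$. The period matrix of $M$ is computed from $\exp_G$ and the elliptic integrals. Its entries are:
\begin{itemize}
\item the periods $\omega_1,\omega_2$ and quasi-periods $\eta_1,\eta_2$ of $\cE$;
\item the elliptic logarithms $p_i,q_j$ and the values $\zeta(p_i),\zeta(q_j)$;
\item the logarithms $t_{ij}$, combined with the field of definition \eqref{FieldDefinition}.
\end{itemize}
The point is to reduce the generators to those appearing in the statement. Choose a $k$-basis $p_1,\dots,p_{n'},q_1,\dots,q_{r'}$ of $\langle p_i,q_j\rangle$. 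Then every other $p_i$ or $q_j$ is, modulo $\Omega\otimes\QQ$, a $k$-linear combination of the basis elements. Hence $p_i$ and $\zeta(p_i)$ are algebraic over the field generated by the basis data and $\omega_1,\omega_2,\eta_1,\eta_2$, via the addition formulas and the action of complex multiplication. So all $p_i,\zeta(p_i),q_j,\zeta(q_j)$ may be kept without changing the transcendence degree.

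Similarly, suppose a product of the $R_{ij}$ is a combination of the chosen generators $R_{i_hj_h}$ (for pairs in $\mathrm{LB}$) or $\rme^{t_{i_lj_l}}$ (for pairs in $\mathrm{NoLB}$). Then the corresponding $t_{ij}$ and $\rme^{t_{ij}}f_{q_j}(p_i)$ are algebraic over the field generated by the chosen ones and the elliptic data. This uses the quasi-periodicity of $\sigma$ and $f_q$ and the logarithm of $\exp_G$. On split fibres, $(i,j)\in \mathrm{NoLB}$, the relevant quantity reduces to $\rme^{t_{ij}}$ up to elliptic factors.

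\textbf{Right-hand side.} Here I apply Theorem \ref{Teo:dimGal(M)}. The first two terms are $4/\dim_\QQ k + 2(n'+r')$, since $\dim_k\langle p_i,q_j\rangle = n'+r'$. For the third term I will show
\[\mathrm{rank}\langle R_{ij}\rangle_{i,j}=\mathrm{rank}\langle R_{ij}\rangle_{(i,j)\in\mathrm{LB}}+\mathrm{rank}\langle \rme^{t_{ij}}\rangle_{(i,j)\in\mathrm{NoLB}}.\]
This uses the decomposition from Theorem \ref{teo:GeneralPoints} and Proposition \ref{dimZ(1)/Z'(1)}. The NoLB points lie on fibres canonically isomorphic to $\cE\times\cE^*\times\GG_m$, and under that isomorphism they are measured by their $\GG_m$-coordinate $\rme^{t_{ij}}$. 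The LB points produce the contributions to $Z'(1)$ and to the LB-part of $Z(1)/Z'(1)$. Since these two contributions are independent pieces of the torus $Z(1)$, the ranks add.

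\textbf{Main obstacle.} The delicate step is the additivity of ranks together with the identification of NoLB ranks with ranks of $\rme^{t_{ij}}$. One must check that no multiplicative relation mixes LB and NoLB points. I would argue this via the toric dimension count: each family injects into a distinct factor of $Z(1)$, so a mixed relation would lower $\dim Z(1)$ below the sum given by the propositions. Finally, for the equality statement: when the field of definition is contained in $\oQQ$, it contributes nothing to the transcendence degree, and \cite[Corollary 6.6]{Bsubmitted} yields equality.
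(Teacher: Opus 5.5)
Your overall route is the paper's: the corollary is the form of the Grothendieck--Andr\'e conjecture in \cite[Corollary 6.6]{Bsubmitted} with $\dim\Galmot(M)$ evaluated by Section 3. Note that this is a conjecture, not a result proven for 1-motives. The paper takes the left-hand side over from \cite[Corollary 6.6]{Bsubmitted}.

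The gap is on the right-hand side. You go through the total rank of Theorem \ref{Teo:dimGal(M)}, so you need
$\rank\langle R_{ij}\rangle_{i,j}=\rank\langle R_{ij}\rangle_{(i,j)\in\mathrm{LB}}+\rank\langle\rme^{t_{ij}}\rangle_{(i,j)\in\mathrm{NoLB}}$.
You justify this by saying that a NoLB point is measured by its $\GG_m$-coordinate, and that is false. Identifying the fibre $(G_j)_{P_i}$ with $\cE\times\cE^*\times\GG_m$ does not identify the subgroup of $G(\CC)$ generated by $R_{ij}$ with the subgroup of $\GG_m(\CC)$ generated by $\rme^{t_{ij}}$, because $R_{ij}$ keeps its abelian coordinate $P_i$.

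Concretely, take $n=r=1$ in case \textbf{(b)}. Up to isogeny $R_{11}=(P_1,\rme^{t_{11}})\in\cE\times\GG_m$ with $P_1$ non-torsion; choose $t_{11}\in 2\pi\ii\QQ$. Then $\mathrm{LB}=\emptyset$ and $R_{11}$ has infinite order, so the left side of your identity is $1$ and the right side is $0$. Theorem \ref{teo:GeneralPoints} asserts the same identity with the same justification.

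In this example $\dim\Galmot(M)=\frac{4}{\dim_\QQ k}+2$, by case \textbf{(b)} or \eqref{DimSplit}. The corollary's right-hand side gives exactly this, but the formula of Theorem \ref{Teo:dimGal(M)}, read literally, gives one more. So your chain closes only because two statements err by the same amount. Your toric-dimension argument against ``mixed'' relations does not repair this.

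The fix is never to pass through $\rank\langle R_{ij}\rangle_{i,j}$. From \eqref{eq:shortexactsequenceUR}, $\dim\UR(M)=2\dim B+\dim Z(1)$ and \eqref{dimB-Z'-Z/Z'}, one gets $\dim\Galmot(M)=\frac{4}{\dim_\QQ k}+2(n'+r')+u+v+s$, with $u,v,s$ as in Notation \ref{Notation}.

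Propositions \ref{dimZ'(1)} and \ref{dimZ(1)/Z'(1)}, together with the LB half of Theorem \ref{teo:GeneralPoints}, give $u+v=\rank\langle R_{ij}\rangle_{(i,j)\in\mathrm{LB}}$.

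For the NoLB part, $s=\dim_\QQ\langle t_{ij}\rangle_{(i,j)\in\mathrm{NoLB}}$ equals $\rank\langle\rme^{t_{ij}}\rangle_{(i,j)\in\mathrm{NoLB}}$ for an elementary reason. The map $\exp$ induces an isomorphism $\CC/2\pi\ii\QQ\cong\CC^\times\otimes_\ZZ\QQ$, and the rank of a finitely generated subgroup of $\CC^\times$ is the dimension of its image there.

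This is precisely the middle line of the computation in the proof of Theorem \ref{Teo:dimGal(M)}. It is also why the corollary is stated with $\rme^{t_{ij}}$ rather than $R_{ij}$ on the NoLB side. With this route, the obstacle you single out disappears.
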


%---------------------------------------------------------
\section*{aknowledgement}
We thank Michel Waldschmidt for his remarks, which greatly improved this paper.

%--------------------------------------------------------------
\section{Notation}
Let $G$ be an extension of an elliptic curve $\cE$ by the multiplicative group $\GG_m.$
A point $R=\exp_G(p, t) \in G(\CC)$ is torsion if there exists a nonzero $a \in \mathbb{Z}$ such that $R^a = 1.$ This is equivalent to the conditions
\begin{align*}
	ap & =\omega \\
	at &  = 2 \pi \rmi s
\end{align*}
for some $\omega \in \Omega$ and $s \in \ZZ.$
Points $R_1, \dots, R_n$ in $G(\CC)$ are multiplicatively dependent if there exists an $n$-tuple $(a_1, \dots, a_n) \in \mathbb{Z}^n \setminus \{(0, \dots, 0)\}$ such that
\[
R_1^{a_1} \cdots R_n^{a_n} = 1.
\]
They are said to be multiplicatively independent if for every \((a_1, \dots, a_n) \in \mathbb{Z}^n\) such that
$R_1^{a_1} \cdots R_n^{a_n} = 1,$
one necessarily has $(a_1, \dots, a_n) = (0, \dots, 0).$
The multiplicative subgroup of $G (\CC)$ generated by $n$-points $R_1, \dots,R_n$ of $G(\CC)$ is
\[\langle R_1,\dots,R_n \rangle
=
\left\{\, R_1^{a_1}\cdots R_n^{a_n} \;\middle|\; (a_1,\dots,a_n)\in \mathbb{Z}^n \,\right\}.\]
Its rank is the maximal number of multiplicatively independent elements it contains.

Let $R = \exp_G(p, t)$ and $R' = \exp_G(p', t')$ be two points of $G(\CC).$ Then the equality $R'^b = R^a$ for some $(a,b) \in \mathbb{Z}^2 \setminus \{(0,0)\}$ holds if and only if
\begin{align}\label{def:R'in<R>}
	\nonumber	bp' &  = ap + \omega \\
	bt' &  = at + 2 i \pi s
\end{align}
for some $\omega \in \Omega$ and $s \in \mathbb{Z}.$ In particular if $b=1$, this shows that $R' \in \langle R \rangle.$ More generally, if $\omega \in \Omega \otimes \mathbb{Q}$ and $s \in \mathbb{Q}$, the above conditions \eqref{def:R'in<R>} are equivalent to requiring that $R'^b R^{-a}$ is a torsion point of $G(\CC).$

%-------------------------------------------------------------------

 %-------------------------------------------------
 
 \section{Decomposition into 1-motives with rank-1 lattice and 1-dimensional torus}\label{decomposition}
 
 	Let $M=[u:\ZZ \rightarrow  G^n   ],
 u(1)=(R_1, \dots, R_n ) \in G^n(\CC),$ be the 1-motive \eqref{GPC-1-motive} defined by the complex numbers $q_j,p_i,t_{ij}$ \eqref{Points}. In order to describe the Lie algebra of the unipotent radical $\UR(M)$ of $M,$ we proceed in three steps:

\begin{enumerate}
	\item Let 
	\[B\]
	be 
	the smallest abelian sub-variety (modulo isogenies) of $\cE^n\times \cE^{*r}$
	which contains a multiple of the point $(P_1,\dots,P_n,Q_1,\dots,Q_r) \in \cE^n \times \cE^{*r} (\CC).$
	
	\medspace
	
	\item  Using the Poincar\'e biextension $\mathcal{P}$ of $(\cE,\cE^*)$ by $\GG_m,$
	in \cite[Example 2.8]{B03} we have constructed explicitly a biextension $\mathcal{B}$ of $(\cE^n \times \cE^{*r},\cE^n \times \cE^{*r})$ by $\GG_m^{nr},$ whose pull-back $d^* \mathcal{B}$ via the diagonal morphism $d:\cE^n \times \cE^{*r} \to (\cE^n \times \cE^{*r}) \times (\cE^n \times \cE^{*r})$ is a $\GG_m^{nr}$-torsor over $\cE^n \times \cE^{*r}$ inducing a Lie bracket 
	$ [\cdot,\cdot]: (\cE^n \times \cE^{*r}) \otimes (\cE^n \times \cE^{*r}) \to \GG_m^{nr}$
	(see \cite[Lemma 3.3, p.600]{B03} and see \cite[(2.8.4)]{B03} for an explicit description of this Lie bracket).
	The pull-back 
	$I^*d^* \mathcal{B}$ of  the $\GG_m^{nr}$-torsor $d^* \mathcal{B}$ via the inclusion $I: B \hookrightarrow \cE^n \times \cE^{*r}$ induces the restriction of Lie bracket to $B$, that is $[\cdot,\cdot]: B \otimes B \to \GG_m^{nr}.$ Let
	\[Z'(1)\]
	be the smallest sub-torus of $\GG_m^{nr}$ which contains the image of the Lie bracket $[\cdot,\cdot]: B \otimes B \to \GG_m^{nr}.$ 
	In \cite[Lemma 3.1]{BP} we have showed that $Z'(1 )$ coincides with the smallest sub-torus of $\GG_m^{nr}$ which contains the values of the factor of automorphy of the  $\GG_m^{nr}$-torsor $I^*d^*\mathcal{B}$. In particular,  the push-down ${pr}_*I^*d^* \mathcal{B}$ via the projection $pr:\GG_m^{nr} \twoheadrightarrow \GG_m^{nr} / Z'(1)$ of the torsor $I^*d^* \mathcal{B}$
	is \textit{the trivial $\GG_m^{nr}  / Z'(1)$-torsor over $B$}, i.e. 
	${pr}_*I^*d^* \mathcal{B}= B \times \GG_m^{nr} / Z'(1) .$
	
	\medspace

	\item Let $ v:\ZZ^n\to \cE$ and $v^*:\ZZ^r\to \cE^* $ 
	be the group homomorphisms determined by $v(x_i)=P_i$ and $v^*(y_j^\vee)=Q_j,$
	where $x_1,\ldots,x_n$ and $y_1^\vee,\ldots,y_r^\vee$ denote the standard bases of
	\(\ZZ^n\) and \(\ZZ^r\), respectively. Let $G_j$ be the extension of the elliptic curve $\cE$ by $\GG_m$ parametrized by the point $Q_j=\exp_{\cE^*}(q_j).$ 
	By \cite[\S 1.2]{BP}, having the group homomorphism  $u: \ZZ \to G^n, u(1)=(R_{ij})_{i,j}$ is equivalent to having
	a trivialization (= biadditive section) $\psi : \ZZ \times \ZZ \longrightarrow  (v \times v^*)^* \cP$ of the pull-back $(v \times v^*)^* \cP$ via $v \times v^*$ of the Poincar\'e biextension $\mathcal{P}$ of $(\cE,\cE^*)$ by $\GG_m$:
	\[R_{ij} = \psi(x_i,y^\vee_j) \in \cP_{P_i,Q_j} \simeq (G_j)_{P_i}. \]

	\par\noindent Consider the two group homomorphisms $V: \ZZ \to \cE^n$ and $V^*: \ZZ \to \cE^{*r} $  defined by the points $P_1,\dots,P_n$ in $ \cE^n(\CC)$ and $Q_1,\dots,Q_r $ in $ \cE^{*r}(\CC)$ respectively. According to \cite[\S 3.1]{BP}, having
	the trivialization $\psi : \ZZ \times \ZZ \longrightarrow  (v \times v^*)^* \cP$ is equivalent to having
	a trivialization  $\Psi : \ZZ \times \ZZ \longrightarrow  (V \times V^*)^* I^*d^* \mathcal{B}$ of the pull-back $(V \times V^*)^*I^*d^* \mathcal{B}$ via $V \times V^*$ of the $\GG_m^{nr}$-torsor $I^*d^* \mathcal{B}$ over $B$.
	The trivialization $\Psi$ defines  
	a point 
	$	\Psi(1,1) = \big(\psi(x_i,y^\vee_j)\big)_{i,j }  \in   \big( (V \times V^*)^* I^*d^*\mathcal{B}\big)_{1,1} $
	which in turn furnishes a point
	\[\tR \in (I^*d^*\mathcal{B})_{(P,Q)}\]
	in the fibre of $I^*d^*\mathcal{B}$ over the point $(P,Q)=(P_1,\dots,P_n,Q_1,\dots,Q_r)  \in  B .$ 
	Because of the equality
	$ (V \times V^*)^*  {pr}_*I^*d^* \mathcal{B}  =  {pr}_*  (V \times V^*)^* I^*d^* \mathcal{B} ,$
	the  trivialization $\Psi$ defines a trivialization 
	$
	{pr}_*\Psi : \ZZ \times \ZZ \longrightarrow  (V \times V^*)^*  {pr}_* I^*d^* \mathcal{B} 
	$
	of the pull-back via $ V \times V^*$ of the trivial torsor $  {pr}_*I^*d^* \mathcal{B}$. Denote by $\pi: {pr}_*I^*d^* \mathcal{B} \twoheadrightarrow \GG_m^{nr}/Z'(1)$ the projection on the second factor.
	The point 
	$	{pr}_*\Psi(1,1) \in  \big(  (V \times V^*)^* {pr}_* I^*d^*\mathcal{B}\big)_{1,1} $
	corresponds to the point 
	\[	pr_* \tR =\big((P,Q), \pi (pr_* \tR)\big) =  \big((P,Q), \pi (pr_*  (\psi(x_i,y^\vee_j))_{i= 1, \dots, n \atop j = 1, \dots, r } )\big).
	\]
	in the fibre of the trivial torsor $pr_* I^*d^*\mathcal{B} =B\times \GG_m^{nr}/Z'(1)$ over the point $(P,Q) \in  B.$ Let
	\[Z(1)\]
	be the smallest sub-torus of $\GG_m^{nr}$ which contains $Z'(1)$ and such that $Z(1)/Z'(1)$ contains the point 
	\begin{equation}\label{piprR}
		\pi (pr_* \tR) =\pi \big(pr_*  \big(\psi(x_i,y^\vee_j)\big)_{i= 1, \dots, n \atop j = 1, \dots, r } \big) .
	\end{equation}
	Notice that the point $	\pi(pr_*\widetilde R)$
	depends on the trivialization $\psi$ and therefore on the points $R_{ij}$ defining the $1$-motive.
\end{enumerate}

Consider the inclusion 
$I:B \hookrightarrow \cE^{n}\times  \cE^{*r}, b \mapsto \big(\gamma_1(b),\dots,\gamma_{n}(b),\gamma_{1}^*(b),\dots ,\gamma_{r}^*(b)\big),$
where  $\gamma_i\in\mathrm{Hom}_{\mathbb Q}(B,\cE) $ ({\it resp.} $\gamma^*_j \in\mathrm{Hom}_{\mathbb Q}(B,\cE^*)$) is the composition of $I$ with the projection on the $i$-th factor of $\cE^{n}$ ({\it resp.} on the $j$-th factor of $\cE^{*r}$) for $i=1,\dots,n$ ({\it resp.} $j=1,\dots,r$).
Set $\beta_{i,j}: =\gamma_i^t\circ\gamma_j^* \in \mathrm{Hom}_{\mathbb Q}(B,B^*),$ where the upper-index ${}^t$ denotes the transpose of a group morphism. 
 By \cite[Theorem 4.2]{BP} the kernel of the surjective map
\begin{equation}
	\begin{matrix}
		f:( \ZZ^{n} \otimes \ZZ^{r}) \otimes_\ZZ \QQ &  \twoheadrightarrow &\sum_{i,j}\QQ (\beta_{i,j} + \beta_{i,j}^t) \subset \mathrm{Hom}_\QQ (B,B^*)\\
		\hfill x_i\otimes y_j^\vee &\mapsto & \beta_{i,j} + \beta_{i,j}^t \hfill
	\end{matrix}
\end{equation}
is the space of all $\QQ$--linear relations among the homomorphisms
$\beta_{ij}+\beta_{ij}^t$.

By \cite[Theorem 5.1]{BP} we have that 
\begin{align}\label{dimB-Z'-Z/Z'}
\nonumber	\dim B &= \dim_k \langle p_i,q_j\rangle_{i,j}\\
	\dim Z'(1) &= \dim_\QQ \langle \beta_{i,j}+\beta_{i,j}^t\rangle_{  (i,j) \in \mathrm{LB}}\\
\nonumber	\dim Z(1)/Z'(1) &= \dim_{\QQ}
	\langle \textstyle{\sum_{(i,j)\in \mathrm{LB}}}	\alpha_{ij}\log( s_{ij}) \rangle
	+\dim_\QQ \langle t_{ij}\rangle_{ (i,j) \in \mathrm{NoLB}}
\end{align}

where 
\begin{itemize}
	\item  $\langle p_i,q_j\rangle_{i,j}$ is the sub $k$--vector space of $\CC / (\Omega \otimes_\ZZ \QQ)$ generated by the classes of the complex numbers $ p_1, \dots, p_n, q_1, \dots,q_r $  modulo $\Omega \otimes_\ZZ \QQ,$
	\item $\langle \beta_{i,j}+\beta_{i,j}^t \rangle_{  (i,j) \in \mathrm{LB}}$ is the sub $\QQ$--vector space of $\mathrm{Hom}_\QQ(B,B^*)  := \mathrm{Hom}(B,B^*) \otimes_\ZZ \QQ $ generated by the group homomorphisms $\beta_{i,j}+\beta_{i,j}^t$ with $(i,j) \in  \mathrm{LB},$
	\item $	\langle \textstyle{\sum_{(i,j)\in \mathrm{LB}}}	\alpha_{ij}\log( s_{ij}) \rangle$ is the sub $\QQ$--vector subspace of  $\CC/ 2 \pi \ii \QQ$ generated by the classes of the logarithms  $\sum_{i,j}\alpha_{ij}\log(s_{ij})$, with $\sum_{i,j}\alpha_{ij} x_i\otimes y_j^\vee \in \ker (f)$ and $(s_{ij})$ any point of $\GG_m^{nr}$ projecting onto $ \pi \big( pr_*\big(\psi(x_i, y_j^\vee)\big)_{(i,j)\in \mathrm{LB}} \big),$
	\item  $\langle t_{ij} \rangle_{ (i,j) \in \mathrm{NoLB}}$ is the sub $\QQ$--vector space of $\CC / 2 \pi \ii \QQ$ generated by the classes of the complex numbers $t_{ij} $  modulo $2 \pi \ii \QQ$ with $(i,j) \in  \mathrm{NoLB}$.
\end{itemize}

\begin{remark}\label{Contribution}
The dimension of the quotient torus $Z(1)/Z'(1)$
splits naturally into a LB- and a NoLB-con\-tri\-bu\-tion. The
former measures the independent differences between trivializations
lying in the same LB-fibres (see Example \ref{example}), while the latter comes from the
NoLB-fibres (that is, split fibres), whose toric coordinates survive unchanged after
quotienting by $Z'(1).$ Since these contributions arise from disjoint
families of fibres, they are independent and their dimensions add.
\end{remark}

The third equality of \eqref{dimB-Z'-Z/Z'} reflects a dichotomy satisfied by the elementary
$1$-motives
\[
M_{ij}=[u_{ij}:\ZZ\longrightarrow G_j],\qquad
u_{ij}(1)=R_{ij}
\]
for $i=1, \dots n$ and $j=1, \dots,r.$ Add the index $i,j$ to the pure motives underlying the Lie algebra of the unipotent radical of the 1-motive $M_{ij}: B_{ij} \subseteq \cE\times \cE^*, Z'_{ij}(1) \subseteq \GG_m, Z_{ij}(1) \subseteq \GG_m.$	For every pair $(i,j)$, exactly one of the following two situations occurs:

\begin{enumerate}
	\item[(i)] $(i,j)\in\mathrm{LB}$. Then
	\[
	\dim Z'_{ij}(1)=1,
	\qquad
	\dim Z_{ij}(1)/Z'_{ij}(1)=0.
	\]
	
	\item[(ii)] $(i,j)\in\mathrm{NoLB}$. Then
	\[
	\dim Z'_{ij}(1)=0, \qquad
	\dim Z_{ij}(1)/Z'_{ij}(1)=1 \Leftrightarrow t_{ij} \notin 2\pi \rmi \QQ .
	\]
\end{enumerate}

According to \cite[Lemma 2.2]{B19}, $M$ and $\oplus_{j=1}^r\oplus_{i=1}^n M_{ij}$ generate the same tannakian category and so they have the same motivic Galois group. We therefore obtain the inequality
\[
\dim \Galmot (M) = \dim \Galmot (\oplus_{j=1}^r\oplus_{i=1}^n M_{ij})  \leqslant \oplus_{j=1}^r \oplus_{i=1}^n \dim \Galmot (M_{ij})
\]
and in particular
\begin{equation}\label{reduction}
	\dim \UR (M) = \dim \UR (\oplus_{j=1}^r\oplus_{i=1}^n M_{ij})  \leqslant \oplus_{j=1}^r \oplus_{i=1}^n \dim \UR(M_{ij}).
\end{equation}
The inequality \eqref{reduction} holds only for the pure motives $B,Z'(1)$ and $Z(1),$ that is 
\begin{align}\label{reductionPureMotifs}
	\nonumber	 \dim B & \leqslant \oplus_{j=1}^r \oplus_{i=1}^n  \dim B_{ij},\\
	\dim Z'(1) & \leqslant \oplus_{j=1}^r \oplus_{i=1}^n \dim Z'_{ij}(1),\\
	\nonumber	 \dim Z(1) & \leqslant \oplus_{j=1}^r \oplus_{i=1}^n \dim Z_{ij}(1).
\end{align}
The analogous inequality for the torus $ Z(1)/Z'(1) $
is, in general, false. Indeed, the Lie Bracket contribution to
\(Z(1)/Z'(1)\) is a global phenomenon which cannot be detected on the
individual $1$-motives $M_{ij}$, as illustrated by the following example.

\begin{example}\label{example}
	 Let $p,q$ be two $k$--linearly independent elliptic logarithms and 
let $G$ be the extension of the elliptic curve $\cE$ by the multiplicative group $\GG_m$ parametrized by the point $Q$. 
Consider the three 1-motives  
\begin{align*}
	M_{R}&=[u:\ZZ \rightarrow  G  ], u(1)= R =\exp_G(p, t), \mathrm{with} \; t \in \CC\\
	M_{R'}&=[u':\ZZ \rightarrow  G  ],	u'(1)= R' =\exp_G( p, t'), \mathrm{with} \;  t' \in \CC,\\ 
	M_{R,R'}&= M_R \oplus M_{R'}.
\end{align*}
We add the index $R$ (\emph{resp.} $R'$ and $R,R'$) to the pure motives underlying the Lie algebra of the unipotent radical of the 1-motive $M_R$ (\emph{resp.} $M_{R'}$ and $M_{R,R'}$). Clearly the abelian varieties $ B_{R,R'}, B_R$ and $  B_{R'}$ coincide.
The tori $Z'_{R,R'}(1), Z'_R(1)$ and $Z'_{R'}(1) $ have dimension 1 (a generator is $\beta_{p,q}+\beta^t_{p,q}$) and 
 $\dim Z_R(1) / Z'_R(1)= \dim Z_{R'}(1) / Z'_{R'}(1)=0$ (see \cite[Corollary 4.5]{BP}).
 Both points $R$ and $R'$
 determine the same generator $\beta_{p,q}+\beta^t_{p,q}$
 of the torus $Z'_{R,R'}(1).$ Hence their difference is
 invisible in $Z'_{R,R'}(1)$ and appears only in the quotient $Z_{R,R'}(1)/Z'_{R,R'}(1).$
  It is measured by the class of $t-t'$
 modulo $2 \pi \ii\QQ.$ Hence
 \[
 \dim Z_{R,R'}(1)/Z'_{R,R'}(1)=1 \quad \Longleftrightarrow \quad t-t'\notin 2\pi\ii\QQ .
 \]
 The additional dimension comes
 from comparing two distinct trivializations of the same fibre. Equivalently,
	$$  \dim Z_{R,R'}(1)= 
	\begin{cases}
		1 &  \text{if} \quad  R' \in \langle R \rangle  , \\
		2 & \text{if} \quad R' \notin \langle R \rangle . 
	\end{cases}
	$$
\end{example}

By \cite[Corollary 4.6]{BP}, if the classes of the elliptic logarithms $p_1, \dots, p_n, q_1, \dots, q_r$ modulo $\Omega \otimes_\ZZ \QQ$ are $k$--linearly independent, then
\[	\dim \Galmot(M) = \frac{4}{\dim_\QQ k} +2 (n+r)+nr. \]
To our knowledge, this is the only explicit computation of this dimension in the non-split case (if the extension $G$ is split see Remark \ref{RemarkSplit}). We now express this dimension in terms of the rank of the multiplicative group generated by the points $R_{ij}.$

\begin{lemma} \label{RankForLI} 
	Let $M=[u:\ZZ \rightarrow  G^n   ],
	u(1)=(R_1, \dots, R_n ) \in G^n(\CC),$ be the 1-motive \eqref{GPC-1-motive} defined by the complex numbers $q_j,p_i,t_{ij}$ \eqref{Points} such that the classes of $q_1, \dots, q_r, p_1, \dots, p_n$ modulo $\Omega \otimes_\ZZ \QQ$ are $k$--linearly independent. Assume $r, n \not=0$. The points $R_{ij} \in G(\CC), i=1, \dots, n, j=1, \dots,r,$ are multiplicatively independent.
	
	In particular,$$\dim \Galmot(M)	= \frac{4}{\dim_\QQ k} +2 (n+r)+\mathrm{rank}  \langle R_{ij}\rangle_{  i=1, \dots, n \atop j=1, \dots,r } .$$
\end{lemma}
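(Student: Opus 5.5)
Suppose there is a relation $\prod_{i,j} R_{ij}^{a_{ij}} = 1$ in $G(\CC)$. The plan is to show that every $a_{ij}$ must vanish, and then read off the dimension formula from \cite[Corollary 4.6]{BP}.

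First we have to give the product a meaning. The points $R_{ij}$ lie in different group varieties $G_j$, so we view $\prod_{i,j}R_{ij}^{a_{ij}}$ inside $G=G_1\times\dots\times G_r$. The relation then splits into one relation for each $j$:
\[
\prod_{i=1}^n R_{ij}^{a_{ij}}=1 \quad\text{in } G_j(\CC).
\]
We project this relation to $\cE$ and use the criterion \eqref{def:R'in<R>} for powers of points of $G$. This gives $\sum_i a_{ij}p_i \in \Omega$ for every $j$. By hypothesis the classes of $p_1,\dots,p_n$ modulo $\Omega\otimes_\ZZ\QQ$ are $k$-linearly independent, so in particular they are $\QQ$-linearly independent. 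Hence $a_{ij}=0$ for all $i,j$.

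This argument is almost too easy, so we should make sure it does not miss a subtlety. If instead one wants the product taken in a single group $G_j$ for a fixed $j$, the same projection argument still works. The remaining concern is that $n=1$ with several $j$ could produce relations such as $R_{1j}$ versus $R_{1j'}$. These points lie in different factors, however, so no nontrivial product of them can be trivial unless each factor is. The factor-by-factor projection above therefore handles this case.

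So $\mathrm{rank}\langle R_{ij}\rangle = nr$, and substituting into \cite[Corollary 4.6]{BP} gives
\[
\dim \Galmot(M) = \frac{4}{\dim_\QQ k} + 2(n+r) + nr = \frac{4}{\dim_\QQ k} + 2(n+r) + \mathrm{rank}\langle R_{ij}\rangle .
\]
The only place where care is needed is the interpretation of multiplicative independence for points lying in the distinct factors $G_j$. Once that is settled, the projection to the elliptic logarithms does all the work; neither the $t_{ij}$ nor the independence of the $q_j$ is needed for the rank computation, only for the dimension formula quoted from \cite{BP}.
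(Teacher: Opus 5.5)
Your argument is correct and is essentially the paper's own proof. You split the relation into one relation $\prod_i R_{ij}^{a_{ij}}=1$ in each factor $G_j$, project to $\cE$ to get $\sum_i a_{ij}P_i=0$, conclude $a_{ij}=0$ from the independence of the $P_i$, and then insert $\mathrm{rank}=nr$ into \cite[Corollary 4.6]{BP}. Your reading of the first coordinate of $\exp_{G_j}$, namely $\sum_i a_{ij}p_i\in\Omega$, is the same step as the paper's application of the projection $\Pi_j:G_j\to\cE$.
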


\begin{proof}
	 Assume by contradiction that there exists a nontrivial relation
	\[
	\prod_{(i,j)} R_{ij}^{a_{ij}} = 1,
	\qquad a_{ij} \in \mathbb{Z},
	\]
	with not all $a_{ij}$ equal to zero.
	Fix $j \in \{1,\dots,r\}$. Since the extension $G$ is the product $G_1 \times \dots \times G_r$ of the extensions $G_j ,$ this relation decomposes componentwise in each $G_j$, giving the equality
	$
	\prod_{i=1}^{n} R_{ij}^{a_{ij}} = 1 $ in $ G_j.
	$
	Applying  the natural projection $\Pi_j : G_j \to \cE$ to this equality, we obtain
	$
	\sum_{i=1}^{n} a_{ij} P_i = 0 $ in $ \cE.
	$
	By assumption, the points $P_1, \dots, P_{n}$ are $k$--linearly independent and so
	$
	a_{1j} = \cdots = a_{nj} = 0.
	$
	Since this holds for every $j = 1, \dots, r$, it follows that all coefficients $a_{ij}$ are zero, contradicting our assumption. Hence the $nr$ points $R_{ij}$ are multiplicatively independent. For the last statement recall that
$\dim \Galmot(\cE)  = \frac{4}{\dim_\QQ k}$ and by \cite[Corollary 4.6]{BP} $\dim Z(1) = \dim Z'(1)=nr.$ Hence using the short exact sequence \eqref{eq:shortexactsequenceUR} we conclude.
\end{proof}

\begin{remark}\label{RemarkSplit}
The cases $n=0$ and $r=0$ are dual to each other through the Cartier duality for 1-motives. The case $r=0$ (equivalently, $q_j \in \Omega \otimes_\mathbb{Z} \mathbb{Q}$ for all $j$) is treated in \cite[Lemma 5.2 and Proposition 5.4]{B02} where we prove that if $M=[u: \ZZ \to \cE^n \times \GG_m^r], u(1)=(P_i,\rme^{t_j})_{i=1, \dots, n,\atop j=1, \dots, r},$ then 
$\dim B =\;  \dim_k \langle p_i\rangle_{i=1, \dots,n },	\dim Z'(1)  =0$ and 
	$\dim Z(1) = \dim Z(1) /Z'(1) = \mathrm{rank}  \langle \rme^{t_j}\rangle_{j=1, \dots,r }.$
	In particular 
	\begin{equation}\label{DimSplit}
			\dim \Galmot(M)	= \frac{4}{\dim_\QQ k} +2  \; \dim_k \langle p_i\rangle_{i=1, \dots,n }+\mathrm{rank}  \langle \rme^{t_j}\rangle_{j=1, \dots,r }.
			\end{equation}
\end{remark}

We finish this section computing the dimensions of the pure motives underlying $\UR(M_{ij})$ in terms of the complex numbers $q_j,p_i,t_{ij}$ \eqref{Points} defining the 1-motive $M_{ij}:$

\begin{description}
	\item[(a)] $P_i$ and $Q_j$ are torsion. Modulo isogenies we assume $P_i=Q_j=0$ and so $M_{ij}=[u_{ij}:\ZZ \to \GG_m \times \cE]$ with
	$u_{ij}(1) =(0,\e^{t_{ij}}) \in \cE \times \GG_m (\CC)$. In this case $\dim B_{ij}= \dim Z'_{ij}(1)=0$ and $\dim Z_{ij}(1) / Z'_{ij}(1)=1$ if and only if  $t_{ij} \notin 2 \pi \ii \QQ.$ 	
	
	 This 1-motive  $M_{ij}=[u_{ij}:\ZZ \to \GG_m \times \cE]$ with
	 $u_{ij}(1) =(0,\e^{t_{ij}}) $ generates the same tannakian category as the 1-motive $[0 \to \cE] \oplus[u_{ij}':\ZZ \to \GG_m]  $ with $u_{ij}'(1) =\e^{t_{ij}} \in \GG_m (\CC).$
	\item[(b)] $Q_j$ is torsion but not $P_i$. Modulo isogenies we assume $Q_j=0$ and so $M_{ij}=[u_{ij}:\ZZ \to \GG_m \times \cE]$ with
	$u_{ij}(1) =(P_i,\e^{t_{ij}}) \in \cE \times \GG_m (\CC).$ In this case $\dim B_{ij}= 1, \dim Z'_{ij}(1)=0$ and $\dim Z_{ij}(1) / Z'_{ij}(1)=1$ if and only if  $t_{ij} \notin 2 \pi \ii \QQ.$
	
	 This 1-motive  $M_{ij}=[u_{ij}:\ZZ \to \GG_m \times \cE]$ with
	$u_{ij}(1) =(P_i,\e^{t_{ij}}) $ generates the same tannakian category as the 1-motive $[u_{ij}': \ZZ \to \cE] \oplus[u_{ij}'':\ZZ \to \GG_m]  $ with  $u_{ij}'(1) =P_i \in \cE (\CC)$ and $u_{ij}''(1) =\e^{t_{ij}} \in \GG_m (\CC).$
	
	 	\item[(c)] $P_i$ is torsion but not $Q_j$. Modulo isogenies we assume $P_i=0$ and so $M_{ij}=[u_{ij}:\ZZ \to G_j]$ with
	 $u_{ij}(1) = R_{ij}= \exp_{G_j} (0,t_{ij}) \in G(\CC).$ In this case $\dim B_{ij}= 1, \dim Z'_{ij}(1)=0$ and $\dim Z_{ij}(1) / Z'_{ij}(1)=1$ if and only if  $t_{ij} \notin 2 \pi \ii \QQ.$ In particular the homomorphism $u_{ij}:\ZZ \to G_j$ factorizes via the torus  $\GG_m$, that is $u_{ij}:\ZZ \to \GG_m \hookrightarrow G_j,$ and if $\Pi : G_j \to \cE$ is the natural projection, $\Pi(R_{ij})=0.$ 
	 
	 This 1-motive $M_{ij}=[u_{ij}:\ZZ \to G_j]$ with
	 $u_{ij}(1) = R_{ij}= \exp_{G_j} (0,t_{ij})$ generates the same tannakian category as the 1-motive $[0 \to G] \oplus[u_{ij}':\ZZ \to \GG_m]  $ with $u_{ij}'(1) =\e^{t_{ij}} \in \GG_m (\CC).$
	 \item[(d)] $P_i$ and $Q_j$ are $k$-linearly dependent. We distinguish two cases:

	 \begin{description}
	 	\item[(d.1)] $\phi(P_i)=Q_j$ (or $\phi(Q_j)=P_i$) with $\phi$ an antisymmetric homomorphism. Since $ \dim Z'_{ij}(1)=0,$  
	 	 the restriction ${\mathcal P}_{\vert B_{ij} } $ is trivial or of order two in $\mathrm{Pic}(B).$ We have
	 $M_{ij}=[u_{ij}:\ZZ \to G_j]$ with
	 	$u_{ij}(1) = R_{ij} $ defined by the point $ (P_i,Q_j,t_{ij}) \in  (G_{j})_{P_i} \cong \cP_{P_i,Q_j} =\{P_i,Q_j\} \times \GG_m $ (or $ 2R_{ij} $ is defined by the point $ (2P_i,Q_j,2t_{ij}) \in \cP_{P_i,Q_j}^2 =\{2P_i,Q_j\} \times \GG_m $). In this case $\dim B_{ij}= 1$ and $\dim Z_{ij}(1) / Z'_{ij}(1)=1$ if and only if  $t_{ij} \notin 2 \pi \ii \QQ.$
	 	 If $\Pi : G_j \to \cE$ is the natural projection, $\Pi(R_{ij})=P_i.$
	 
	 	\item[(d.2)] $\phi(P_i)=Q_j$ (or $\phi(Q_j)=P_i$) with $\phi$ a non antisymmetric homomorphism. We have $M_{ij}=[u_{ij}:\ZZ \to G_j]$ with
	 	$u_{ij}(1) = R_{ij}= \exp_{G_j} (p_i,t_{ij}) \in G(\CC)$. In this case $\dim B_{ij}= 1, \dim Z'_{ij}(1)=1$ and
	 	 $\dim Z_{ij}(1) / Z'_{ij}(1)=0$ (remark that here we have the equality $\dim Z_{ij}(1) / Z'_{ij}(1)=0$ independently of the complex number $t_{ij}$).
	 		 \end{description}

	 \item[(e)] $P_i$ and $Q_j$ are $k$-linearly independent. We have $M_{ij}=[u_{ij}:\ZZ \to G_j]$ with
	 $u_{ij}(1) = R_{ij}= \exp_{G_j} (p_i,t_{ij}).$ In this case $\dim B_{ij}= 2, \dim Z'_{ij}(1)=1$ and $\dim Z_{ij}(1) / Z'_{ij}(1)=0$  (also here $ \dim Z_{ij}(1) / Z'_{ij}(1)$ is trivial independently of the complex number $t_{ij}$).
\end{description}

The cases \textbf{(b)} and \textbf{(c)} are dual of each other: the Cartier dual of the 1-motive $M_{ij}$ described in \textbf{(b)} is the 1-motive 
$M_{ij}$ described in \textbf{(c)} and viceversa.

%-------------------------------------------------

\section{Proof of the main theorem}

Consider the 1-motive $
M=[u:\ZZ \rightarrow G^n ], u(1) =(R_1, \dots, R_n)$ \eqref{GPC-1-motive} defined by the points $p_i, q_j$ and $t_{ij}$ \eqref{Points}.

\begin{notation}\label{Notation} Throughout this section, we use the following notation.

	(1) Let 
	$$p'_1, \dots ,p'_{n'}, q'_1,\dots ,q'_{r'} $$
	be a $k$--basis of the sub $k $--vector space $ \langle p_i,q_j \rangle_{i,j}$ of  $\CC/(\Omega\otimes_\ZZ\QQ)$ generated by the classes of $p_1, \dots, p_n,q_1, \dots, q_r .$ For ease of notation, we assume without loss of generality that  $p_i'=p_i$ for $1\leqslant i\leqslant n'$,  $q_j'=q_j$ for $1\leqslant j\leqslant r'.$ Remark that for $i \leqslant n'$ and $j \leqslant r'$,  $(i,j) \in \mathrm{LB}$ by \cite[Corollary 4.5]{BP}.
	
	(2) Set $u:=\dim_\QQ \langle \beta_{i,j}+\beta_{i,j}^t\rangle_{  (i,j) \in \mathrm{LB}}$ and let 
	$$\gamma_1, \dots ,\gamma_{u}$$
	be a $\QQ$--basis of the sub $\QQ $--vector space $ \langle \beta_{i,j}+\beta_{i,j}^t \rangle_{  (i,j) \in \mathrm{LB}}$ of $\mathrm{Hom}_\QQ(B,B^*) $ generated by the homomorphisms $\beta_{i,j}+\beta_{i,j}^t$ with $(i,j) \in \mathrm{LB}.$  Without loss of generality, for $m=1, \dots,u,$ we assume $\gamma_m =\beta_{i_m,j_m}+\beta_{i_m,j_m}^t$ for some $(i_m,j_m) \in \mathrm{LB}.$ 
	
	In \cite[Lemma 6.4]{Bsubmitted} we showed that $u \geqslant n' r'$ and so 
	for $m=1, \dots,n'r',$ we may assume $ i_m =1, \dots, n'$ and  $ j_m =1, \dots, r'.$

	(3) Set $v:=  \dim_{\QQ}
	\langle \textstyle{\sum_{(i,j)\in \mathrm{LB}}}	\alpha_{ij}\log( s_{ij}) \rangle$ and let
	\[
	\theta_1,\ldots,\theta_v
	\]
	be a $\QQ$--basis of the sub $\QQ$--vector space of
	$\CC/2\pi \ii\QQ$ generated by the classes of \\
	$\sum_{(i,j)\in \mathrm{LB}}
	\alpha_{ij}\log(s_{ij}),$
	with $\sum_{(i,j)\in \mathrm{LB}}
	\alpha_{ij}x_i\otimes y_j^\vee
	\in \ker (f) $
	and with $(s_{ij})$ any point of $\GG_m^{nr}$ projecting onto $
	\pi\big(pr_*\big(\psi(x_i,y_j^\vee)\big)_{(i,j)\in \mathrm{LieBracket}}\big).$
	We choose this basis among the classes of the logarithms
	of the coordinates of the trivialization point
	$ \pi(pr_*\widetilde R).$
	Thus, after reindexing if necessary, for $a=1, \dots,v,$ we assume
	\[
	\theta_a=t_{i_aj_a}
	\]
	for suitable pairs $(i_a,j_a)\in \mathrm{LB} \setminus \{ (i_m,j_m)\}_{m=1, \dots,u}.$
	The pairs $(i_a,j_a)$ are chosen so that they do
	not contribute new dimensions to $Z'(1)$ and the classes of
	$t_{i_aj_a}$ form a basis of the $\mathrm{LB}$-contribution
	to $Z(1)/Z'(1)$.
	
	(4) Set $s:=\dim_\QQ \langle t_{ij}\rangle_{ (i,j) \in \mathrm{NoLB}}$ and let 
	$$t_1, \dots ,t_{s}$$
	be a $\QQ$--basis of the sub $\QQ $--vector space $ \langle t_{ij} \rangle_{(i,j) \in \mathrm{NoLB}}$ of $\CC / 2\pi\ii \QQ$ generated by the classes of $t_{ij} $ with $(i,j) \in \mathrm{NoLB}$.  Without loss of generality, for $l=1, \dots,s,$ we assume $t_l= t_{i_lj_l}$ for some $(i_l,j_l) \in \mathrm{NoLB}.$
	
\end{notation}
\medspace

The distinguished points introduced in the above Notation play different
geometric roles.
The points $R_{i_mj_m} $ correspond to the generators $\beta_{i_mj_m}+\beta^t_{i_mj_m}$
of the torus \(Z'(1)\). They determine the
non-split fibres $\cP_{P_{i_m}, Q_{j_m}}$ from which every other LB-fibre is obtained,
modulo isogeny, and therefore account for the whole torus \(Z'(1)\). They should be regarded as representatives of the fundamental non-split fibres $\cP_{P_{i_m}, Q_{j_m}}$ underlying the 1-motive $M.$

The remaining distinguished LB-points
$ R_{i_aj_a}$
do not contribute further to \(Z'(1)\). Instead, by the
biadditivity of the Poincaré biextension, every such point decomposes,
modulo isogeny, into points lying over the fundamental non-split fibres $\cP_{P_{i_m}, Q_{j_m}}$ determined
by the points \(R_{i_mj_m}\). The corresponding differences of
trivializations on these fibres $\cP_{P_{i_m}, Q_{j_m}},$
measured modulo \(2\pi \rmi \QQ\), generate the
LB-contribution to the quotient torus \(Z(1)/Z'(1)\).

Finally, for the pairs \((i_l,j_l)\in\mathrm{NoLB}\), the fibres are split,
$ \cP_{P_{i_l}, Q_{j_l}} \simeq
(G_{j_l})_{P_{i_l}}
\simeq
\mathcal E\times\mathcal E^*\times\mathbb G_m,$
so that
$R_{i_lj_l}=(P_{i_l},Q_{j_l},e^{t_{i_lj_l}}).$
Hence the NoLB-contribution to the quotient torus
\(Z(1)/Z'(1)\) is completely determined by the toric coordinates
\(e^{t_{i_lj_l}}\), or equivalently by the classes of the logarithms
\(t_{i_lj_l}\) modulo \(2\pi \rmi \QQ\).

\medspace

We first prove that the points
$R_{i_mj_m},R_{i_aj_a}$ and $ \rme^{t_{i_lj_l}}$
which account for the dimensions of the pure motives underlying
\(\Lie\UR(M)\) are multiplicatively independent.
Next, we verify that every point \(R_{ij}\) \eqref{R} defining $M$ belongs to the multiplicative
subgroup of $G(\CC)$ generated by these points. Consequently, $R_{i_mj_m},R_{i_aj_a}$ and $ \rme^{t_{i_lj_l}}$ form a basis of
the multiplicative subgroup of \(G(\CC)\) generated by all the points
\(R_{ij}\).

\begin{proposition}\label{dimZ'(1)}
			Let $M=[u:\ZZ \rightarrow  G^n],
		u(1)=(R_1, \dots, R_n ) \in G^n(\CC),$ be the 1-motive \eqref{GPC-1-motive} defined by the complex numbers $q_j,p_i,t_{ij}$ \eqref{Points}.
		The points $R_{i_mj_m} \in G(\CC), m=1, \dots,u, $ are multiplicatively independent.
		
		In particular,
	\[ \dim Z'(1) =	\mathrm{rank} \langle  R_{i_mj_m}\rangle_{ m=1,\dots,u }.\]
\end{proposition}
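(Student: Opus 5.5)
I argue by contradiction, in the style of the proof of Lemma \ref{RankForLI}. Suppose there is a nontrivial relation
\[
\prod_{m=1}^{u} R_{i_mj_m}^{a_m}=1,\qquad a_m\in\ZZ,
\]
with not all $a_m$ zero. I would not work only with the projection to $\cE$, because several $R_{i_mj_m}$ may lie over $k$--dependent points $P_{i_m}$. Instead I would read the relation through the biextension structure of Section \ref{decomposition}, using the description of a point $R_{ij}$ as the value $\psi(x_i,y_j^\vee)$ of the trivialization $\psi$.

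\textbf{Step 1: reduce to a relation on the base.} Decompose the relation componentwise in $G=G_1\times\dots\times G_r$. For each $j$ this gives $\prod_{m:\,j_m=j}R_{i_mj}^{a_m}=1$ in $G_j$. Projecting to $\cE$ yields
\[
\sum_{m:\,j_m=j}a_mP_{i_m}=0 \qquad\text{for every } j,
\]
so in $\cE$ we get the relation $\sum_m a_m\,\gamma_{i_m}(P,Q)$ grouped by $j$. Since $\cP_{P,Q}$ is biadditive, the product $\prod_m R_{i_mj_m}^{a_m}$ lies in the fibre of $\cP$ over the appropriate sum in the first variable. It is trivial exactly when this combination trivializes the corresponding fibre of $I^*d^*\mathcal B$ over $(P,Q)\in B$.

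\textbf{Step 2: pass to $Z'(1)$ via the Lie bracket.} This is the key step. I would use \cite[Lemma 3.1]{BP}: $Z'(1)$ is generated by the values of the factor of automorphy of $I^*d^*\mathcal B$, and the character of $\GG_m^{nr}$ attached to $x_i\otimes y_j^\vee$ corresponds under $f$ to $\beta_{i,j}+\beta_{i,j}^t$. A relation $\prod_m R_{i_mj_m}^{a_m}=1$ means that the section $\sum_m a_m\,x_{i_m}\otimes y_{j_m}^\vee$ gives a trivialization which is constant (equal to $1$) at the point $(P,Q)$. The same then holds at every multiple of $(P,Q)$, which is Zariski dense in $B$ modulo isogeny. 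Hence the pushed-down torsor along the character $\sum_m a_m x_{i_m}\otimes y_{j_m}^\vee$ is trivial over $B$, with trivial factor of automorphy. By \cite[Theorem 4.2]{BP} this forces $\sum_m a_m(\beta_{i_m,j_m}+\beta_{i_m,j_m}^t)=0$ in $\mathrm{Hom}_\QQ(B,B^*)$. Since the $\gamma_m$ form a $\QQ$--basis, every $a_m=0$, which contradicts the assumption.

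\textbf{Step 3: conclude.} The $u$ points $R_{i_mj_m}$ are therefore multiplicatively independent, so $\mathrm{rank}\langle R_{i_mj_m}\rangle=u$. This equals $\dim Z'(1)$ by \eqref{dimB-Z'-Z/Z'}.

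The main obstacle is the passage in Step 2 from ``the product of points is $1$ at the single point $(P,Q)$'' to ``the character kills the factor of automorphy on all of $B$.'' I would first try the density argument: $B$ is the Zariski closure of $\ZZ(P,Q)$ up to isogeny, and biadditivity of $\psi$ propagates the triviality to all multiples. If that proves delicate, I would instead use the dimension count. The subtorus of $\GG_m^{u}$ generated by the $R_{i_mj_m}$'s toric data maps onto $Z'(1)$, which has dimension $u$. A multiplicative relation would cut this down to a proper subtorus still containing the Lie bracket image, contradicting the minimality of $Z'(1)$.
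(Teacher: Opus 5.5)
Your overall strategy is sound: turn a multiplicative relation among the $R_{i_mj_m}$ into a $\QQ$-linear relation among the basis elements $\beta_{i_m,j_m}+\beta_{i_m,j_m}^t$. It differs from the paper's route. The paper adds the $M_{i_hj_h}$ one at a time and argues that a dependent point would not enlarge the tannakian category, hence not $Z'(1)$, contradicting the choice of basis.

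\textbf{The gap is in Step 2}, which you correctly flag as the crux.
\begin{enumerate}
\item Saying that the trivialization ``equals $1$'' at $(P,Q)$ or its multiples only makes sense in a canonically trivialized fibre. In a fibre of $\chi_*I^*d^*\mathcal B$, with $\chi=\sum_m a_m x_{i_m}\otimes y_{j_m}^\vee$, the canonical trivialization comes from the biextension structure. It exists over $b$ when $\sum_{m:\,j_m=j}a_m\gamma_{i_m}(b)=0$ for every $j$.
\item Propagating it from $(P,Q)$ to all of $B$ is therefore exactly the statement that these homomorphisms vanish on $B$, which is what has to be proved. Prescribing points over a Zariski-dense set of fibres does not by itself trivialize a $\GG_m$-torsor.
\item The fallback dimension count fails as well. $Z'(1)$ is determined by $B$ and the $\beta_{i,j}$ alone, i.e.\ by the $P_i,Q_j$ and not by the $t_{ij}$. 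So no relation among the points can ``cut it down''. Moreover, the ``toric data'' of a point in a non-split fibre is not canonically defined.
\end{enumerate}

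\textbf{The missing idea} is to run your density/minimality argument on homomorphisms rather than on the torsor. Step 1 already supplies what is needed, so dismissing the projection to $\cE$ was unwarranted.

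For each $j$ set $\psi_j:=\sum_{m:\,j_m=j}a_m\gamma_{i_m}\in\mathrm{Hom}(B,\cE)$. Here $\gamma_i$ is the $i$-th component of $I$, not the basis element of Notation~\ref{Notation}(2). Step 1 says $\psi_j$ kills a multiple of $(P,Q)\in B$. The identity component of $\ker\psi_j$ is then an abelian subvariety containing a multiple of $(P,Q)$. By minimality it contains $B$, so $\psi_j=0$. Since $\beta_{i,j}=\gamma_i^t\circ\gamma_j^*$,
\[
\sum_m a_m\beta_{i_m,j_m}=\sum_j\psi_j^t\circ\gamma_j^*=0 .
\]
Hence $\sum_m a_m(\beta_{i_m,j_m}+\beta^t_{i_m,j_m})=0$, and all $a_m=0$. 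This is precisely how $k$-dependence among the $P_{i_m}$ is absorbed: it becomes a relation among the $\gamma_{i_m}$, which the choice of basis forbids.

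With this replacement your argument is complete, and Step 3 follows from \eqref{dimB-Z'-Z/Z'} as you say. It is also more elementary than the paper's proof. It needs no case split and no appeal to tannakian categories or to the dimension formula for the sub-1-motives $M_h$. It also makes visible that the $t_{ij}$ play no role in this independence.
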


\begin{proof} If all $p_i$ and $q_j$ are torsion, then $u=0.$ Hence we may assume that not all $p_i$ and $q_j$ are torsion. We distinguish two cases:
	
		1) $ \dim_k \langle p_i,q_j\rangle_{i,j} =1.$
		
	Because of Cartier duality for 1-motives, without loss of generality, we can choose $q_1$ as a $k$--basis of the $k $--vector subspace $\langle p_i,q_j\rangle_{i,j}$ of  $\CC/(\Omega\otimes_\ZZ\QQ)$ generated by the classes of $p_1, \dots, p_n,q_1, \dots, q_r.$ After possibly reindexing, we may assume that $j_1=1.$
	
	Consider the 1-motives
	\[ \qquad M_h:= \oplus_{\nu=1}^h M_{i_\nu j_\nu} \]
	for $h=1,\dots,u.$
	Let $B_h, Z'_{h}(1)$ and $Z_h(1)$ be the pure motives underlying $\Lie \UR (M_h)$ for $h=1,\dots,u.$ Since $\gamma_1, \dots ,\gamma_{u}$
	is a basis of the sub $\QQ $--vector space $ \langle \beta_{i,j}+\beta_{i,j}^t \rangle_{  (i,j) \in \mathrm{LB}},$ $Z'_{u}(1) = Z'(1) .$
	
	Consider the following multiplicative subgroups of $G(\CC)$
	\[ 
	\Gamma_h:=\langle R_{ij}\rangle_{(i,j)\in \{(i_1,j_1),\dots,(i_h,j_h)\}}
	\subseteq G(\mathbb{C})	\]
	for $h=1, \dots, u.$ Then $\Gamma_u= \langle  R_{i_mj_m}\rangle_{ m=1,\dots,u } $.
	
	For $h=1$, \(\gamma_1\) was chosen as the first element
	of a \(\QQ\)-basis of $ \langle \beta_{i,j}+\beta_{i,j}^t \rangle_{  (i,j) \in \mathrm{LB}}.$ Hence 
	\[
	\dim Z'_1(1)=1=\operatorname{rank}(\Gamma_1).
	\]
	We now prove that for each $h=2,\dots,u$,
	\begin{equation}\label{eq:2}
		\dim Z'_{h}(1)-\dim Z'_{h-1}(1)
		=
		\mathrm{rank}(\Gamma_h/\Gamma_{h-1}) =1.
	\end{equation}

	If $\mathrm{rank}(\Gamma_h/\Gamma_{h-1})=0$, then $R_{i_h j_h}$ is multiplicatively dependent
	on $\Gamma_{h-1}$, hence the $1$-motive $M_{i_h j_h}$ belongs to the tannakian category generated by $M_{h-1}.$
	Therefore $M_h$ and $M_{h-1}$ generate the same tannakian
	category, and
	\[
	\dim Z'_{h}(1)=\dim Z'_{h-1}(1).
	\]
This contradicts the choice of  $
	\gamma_h\notin 	\left\langle	\gamma_1,\ldots,\gamma_{h-1} \right\rangle_{\QQ}.$  Hence
	$
	\operatorname{rank}(\Gamma_h/\Gamma_{h-1})\neq0. $

	If $\mathrm{rank}(\Gamma_h/\Gamma_{h-1})=1$, then $R_{i_h j_h}$ is multiplicatively independent
	modulo $\Gamma_{h-1}$. In this case, the 1-motive $M_{h-1}$ is a quotient of $M_h$, and so we have an inclusion of motivic Galois groups $\Galmot(M_{h-1}) \hookrightarrow  \Galmot(M_h).$ 
Since the equality $\dim Z'_{h}(1)=\dim Z'_{h-1}(1)$ is impossible with our hypothesis on the $\gamma_h$, 
	\[
	\dim Z'_{h}(1)=\dim Z'_{h-1}(1)+1.
	\]
	This proves \eqref{eq:2}. 
	
	Summing over $h=2,\dots,u$ the equalities \eqref{eq:2}, and using the case $h=1$,  we obtain
	\[
 \dim Z'(1)=	\dim Z'_u(1)
	=
	\mathrm{rank}(\Gamma_1) +	\sum_{h=2}^u \mathrm{rank}(\Gamma_h/\Gamma_{h-1})
	=
	\mathrm{rank}(\Gamma_u) =u.
	\]
	
	2) $ \dim_k \langle p_i,q_j\rangle_{i,j}\geqslant 2.$ 
	
	 One may choose a $k$--basis $p'_1,\dots,p'_{n'},q'_1,\dots,q'_{r'}$ of the $k $--vector subspace $\langle p_i,q_j\rangle_{i,j}$ of  $\CC/(\Omega\otimes_\ZZ\QQ)$ generated by the classes of $p_1, \dots, p_n,q_1, \dots, q_r$ with $n' \geqslant 1$ and $r'\geqslant 1.$ Without loss of generality, we assume that  $p_i'=p_i$ for $i=1, \dots, n'$ and  $q_j'=q_j$ for $j=1, \dots, r'. $ By \cite[Corollary 4.5]{BP}, for $i \leqslant n'$ and $j \leqslant r'$ the couple $(i,j) \in \mathrm{LB}$.
Denote by LI the subset of LB consisting of all couples $(i,j)$ with $i=1, \dots,n'$ and $j=1, \dots,r'.$
Set $ M^{\mathrm{LI}} =\oplus_{(i,j) \in \mathrm{LI}} M_{ij}$
We add the index $ \mathrm{LI}$ to the tori underlying the unipotent radical of the 1-motives  $ M^\mathrm{LI}.$  Because of the inclusion LI $\subseteq$ LB, $Z'_\mathrm{LI}(1) $ is a subtorus of $Z'(1) $ and 
\[Z'(1) = Z'_\mathrm{LI}(1) \times Z'(1) / Z'_\mathrm{LI}(1).\]

Since the elliptic logarithms $p_1,\dots,p_{n'},q_1,\dots,q_{r'}$ are $k$--linearly independent, by \cite[Corollary 4.6]{BP} and Lemma \ref{RankForLI}
\[ \dim Z_\mathrm{LI}(1) = \dim Z'_\mathrm{LI}(1) =  \operatorname{rank}\langle R_{ij} \rangle_{(i,j)\in \mathrm{LI}} =n'r'. \]
Therefore it remains to prove that
	\begin{equation}
		\label{eq:main}
		\dim\bigl(Z'(1)/Z'_{\mathrm{LI}}(1)\bigr)
		=
		\operatorname{rank}\bigl(
		\langle R_{i_mj_m} \rangle_{m=1, \dots,u} \big/
		\langle R_{ij} \rangle_{(i,j)\in \mathrm{LI}}
		\bigr) =u-n'r'.
	\end{equation}

	Consider the 1-motives
	\[
	M_{n'r'} := M^{\mathrm{LI}}
	\quad \text{and} \quad
	M_h := M_{n'r'} \oplus ( \oplus_{m=n'r'+1}^h M_{i_m j_m}) \quad \text{for} \; h = n'r'+1,\dots, u.
	\]
 Let $B_h$, $Z'_h(1)$ and $Z_h(1)$ be the pure motives underlying $\Lie \UR (M_h)$ for $h= n'r', \dots, u.$ Clearly 
$	Z'_{n'r'}(1) = Z'_{\mathrm{LI}}(1).$  Since $\gamma_1, \dots ,\gamma_{u}$
is a basis of the $\QQ $--vector space $ \langle \beta_{i,j}+\beta_{i,j}^t \rangle_{  (i,j) \in \mathrm{LB}},$ $Z'_{u}(1) = Z'(1) .$
	
	Consider the following multiplicative subgroups of $G(\mathbb{C})$:
	\[
	\Gamma_{n'r'} := \langle R_{ij} \rangle_{(i,j)\in \mathrm{LI}}
	\quad \text{and} \quad
	\Gamma_h := \langle R_{ij} \rangle_{(i,j)\in \mathrm{LI} \cup \{(i_{n'r'+1},j_{n'r'+1}),\dots,(i_h,j_h)\}} \quad \text{for} \; h = n'r'+1,\dots,u.
	\]
 Then $\Gamma_{u} = \langle R_{i_mj_m} \rangle_{m=1, \dots,u}$.
	
	We now prove that for each $h=n'r'+1,\dots,u$,
	\begin{equation}
		\label{eq:increment}
		\dim Z'_h(1) - \dim Z'_{h-1}(1)
		=
		\operatorname{rank}(\Gamma_h / \Gamma_{h-1}) =1.
	\end{equation}

	If $\operatorname{rank}(\Gamma_h / \Gamma_{h-1}) = 0$, then $R_{i_h j_h}$ is multiplicatively dependent on $\Gamma_{h-1}$, and so the 1-motive $M_{i_h j_h}$ belongs to the tannakian category generated by $M_{h-1}$. Therefore $M_h$ and $M_{h-1}$ generate the same tannakian category, and
	\[
	\dim Z'_h(1) = \dim Z'_{h-1}(1).
	\]
	This contradicts the choice of $
	\gamma_h\notin 	\left\langle	\gamma_1,\ldots,\gamma_{h-1} \right\rangle_{\QQ}.$ Hence
	$
	\operatorname{rank}(\Gamma_h/\Gamma_{h-1})\neq0. $
	
	If $\operatorname{rank}(\Gamma_h / \Gamma_{h-1}) = 1$, then $R_{i_h j_h}$ is multiplicatively independent modulo $\Gamma_{h-1}$. In this case, the 1-motive $M_{h-1}$ is a quotient of $M_h$, and so we have an inclusion of motivic Galois groups $\Galmot(M_{h-1}) \hookrightarrow  \Galmot(M_h).$ 
	Since the equality $\dim Z'_{h}(1)=\dim Z'_{h-1}(1)$ is impossible with our hypothesis on the $\gamma_h$, 
	\[
	\dim Z'_{h}(1)=\dim Z'_{h-1}(1)+1.
	\]
	This proves \eqref{eq:increment}. 
	
	Summing over $h=n'r'+1,\dots,u$ the equalities \eqref{eq:increment}, we obtain
	\[
	\dim Z'(1) - \dim Z'_{\mathrm{LI}}(1)
	=
	\sum_{h=n'r'+1}^u \operatorname{rank}(\Gamma_h / \Gamma_{h-1})
	=
	\operatorname{rank}(\Gamma_u / \Gamma_{n'r'}) =u-n'r',
	\]
	which furnishes \eqref{eq:main}.
\end{proof}

The following remark complements Remark~\ref{Contribution} and should be read together with it. Remark~\ref{Contribution}  explains the decomposition of the dimension of the quotient torus $Z(1)/Z'(1),$ whereas the following remark describes the decomposition of the multiplicative rank associated with the LB-points.

\begin{remark}\label{Contribution2}
	The two families of points
	$	\{R_{i_mj_m}\}_{m=1,\ldots,u} $ and $
	\{R_{i_aj_a}\}_{a=1,\ldots,v}	$
	are multiplicatively independent.
	Indeed, the first family accounts for
	the torus \(Z'(1)\), whereas the second accounts for the
	LB-contribution to the quotient torus \(Z(1)/Z'(1)\). Because of the short exact sequence
	$	0\rightarrow Z'(1)\rightarrow Z(1)
	\rightarrow Z(1)/Z'(1)\rightarrow 0,$
	the two families contribute to different factors of the torus $Z(1)$ and
	their multiplicative contributions are therefore independent. Equivalently,
	$$	\mathrm{rank} \langle  R_{i_mj_m}\rangle_{ m=1,\dots,u} +	\mathrm{rank} \langle  R_{i_aj_a}\rangle_{ a=1,\dots,v} = \mathrm{rank} \langle  R_{i_mj_m},  R_{i_aj_a} \rangle_{ \stackrel{m=1,\dots,u}{a=1,\dots,v}}.$$
\end{remark}

\begin{proposition}\label{dimZ(1)/Z'(1)}
	Let $M=[u:\ZZ \rightarrow  G^n],
	u(1)=(R_1, \dots, R_n ) \in G^n(\CC),$ be the 1-motive \eqref{GPC-1-motive} defined by the complex numbers $q_j,p_i,t_{ij}$ \eqref{Points}.
	The points $ R_{i_aj_a} \in G(\CC), a=1, \dots,v ,$ are multiplicatively independent. Moreover the points 
	$\rme^{t_{i_lj_l}} \in \GG_m(\CC), l=1, \dots,s , $ are also multiplicatively independent.
	
	In particular,
	\[ \dim Z(1)/Z'(1) =	\mathrm{rank} \langle  R_{i_aj_a}\rangle_{ a=1,\dots,v} + \mathrm{rank}  \langle \rme^{t_{i_lj_l}}\rangle_{ l=1,\dots,s}.\]
\end{proposition}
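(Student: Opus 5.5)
The plan follows the strategy of Proposition \ref{dimZ'(1)}: build the 1-motive up one distinguished point at a time and compare the growth of the relevant torus with the growth of the multiplicative rank. By Remark \ref{Contribution}, the LB- and NoLB-contributions to $Z(1)/Z'(1)$ come from disjoint families of fibres and their dimensions add. By the third equality of \eqref{dimB-Z'-Z/Z'} these contributions are $v$ and $s$. It therefore suffices to prove separately that $\mathrm{rank}\langle R_{i_aj_a}\rangle_{a}=v$ and $\mathrm{rank}\langle \rme^{t_{i_lj_l}}\rangle_l=s$.

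\emph{NoLB part.} For $(i_l,j_l)\in\mathrm{NoLB}$ the fibre is split, and $R_{i_lj_l}=(P_{i_l},Q_{j_l},\rme^{t_{i_lj_l}})$ by cases (a)--(d.1). Suppose $\prod_l (\rme^{t_{i_lj_l}})^{b_l}=1$ with $b_l\in\ZZ$. Then $\sum_l b_l t_{i_lj_l}\in 2\pi\ii\ZZ$. Since the classes of $t_1,\dots,t_s$ form a $\QQ$-basis of $\langle t_{ij}\rangle_{\mathrm{NoLB}}\subseteq \CC/2\pi\ii\QQ$ (Notation \ref{Notation}(4)), all $b_l$ vanish. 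This gives multiplicative independence and rank $s=\dim_\QQ\langle t_{ij}\rangle_{\mathrm{NoLB}}$, in agreement with the split computation \eqref{DimSplit} of Remark \ref{RemarkSplit}.

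\emph{LB part.} Set $M_0:=\oplus_{m=1}^u M_{i_mj_m}$ and $M_a:=M_0\oplus(\oplus_{\nu=1}^a M_{i_\nu j_\nu})$ for $a=1,\dots,v$, with groups $\Gamma_0:=\langle R_{i_mj_m}\rangle_m$ and $\Gamma_a:=\langle\Gamma_0,R_{i_1j_1},\dots,R_{i_aj_a}\rangle$. By Proposition \ref{dimZ'(1)} and the choice of the $\gamma_m$, every $M_a$ has $Z'_a(1)=Z'(1)$. I then show, for each $a$,
\[
\dim Z_a(1)/Z'(1)-\dim Z_{a-1}(1)/Z'(1)=\mathrm{rank}(\Gamma_a/\Gamma_{a-1})=1.
\]
If the rank of the quotient were $0$, then $R_{i_aj_a}$ would be multiplicatively dependent on $\Gamma_{a-1}$, so $M_{i_aj_a}$ would lie in the tannakian category generated by $M_{a-1}$. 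Hence $Z_a(1)=Z_{a-1}(1)$, contradicting the fact that $\theta_a=t_{i_aj_a}$ is $\QQ$-independent of $\theta_1,\dots,\theta_{a-1}$ modulo $2\pi\ii\QQ$. If the rank is $1$, then $M_{a-1}$ is a quotient of $M_a$, so the torus can only grow, and by the same basis choice it grows by exactly one. Summing over $a$ gives $v=\mathrm{rank}(\Gamma_v/\Gamma_0)$, and Remark \ref{Contribution2} converts this into $\mathrm{rank}\langle R_{i_aj_a}\rangle_a=v$, which is the required independence.

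\emph{Main obstacle.} The delicate step is justifying that the dimension jump in $Z(1)/Z'(1)$ is detected precisely by $\theta_a$. This requires two facts. First, the coordinates of $\pi(pr_*\tR)$ for $M_a$ restrict to those for $M_{a-1}$, which follows from the functoriality of the construction \eqref{piprR} under the projection $M_a\to M_{a-1}$. Second, the new $\ker(f)$-relation involving the pair $(i_a,j_a)$ produces the logarithm $t_{i_aj_a}$ modulo the span of the earlier ones. For the second fact I would argue as in Example \ref{example}: by biadditivity of $\cP$, $R_{i_aj_a}$ decomposes up to isogeny over the fundamental fibres $\cP_{P_{i_m},Q_{j_m}}$, and the resulting difference of trivializations is exactly the class of $t_{i_aj_a}$ in $\CC/2\pi\ii\QQ$.
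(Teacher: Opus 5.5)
Your proposal is essentially the paper's proof. The NoLB part is read off from the choice of the classes of the $t_{i_lj_l}$ as a $\QQ$-basis modulo $2\pi\ii\QQ$, and the LB part uses the same chain $M_0=\oplus_m M_{i_mj_m}\subset M_1\subset\dots\subset M_v$, the same groups $\Gamma_a$, the same rank-$0$/rank-$1$ dichotomy and summation, and the same appeal to Remark~\ref{Contribution2}.

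The step you isolate as the main obstacle is what the paper builds into Notation~\ref{Notation}(3) by setting $\theta_a=t_{i_aj_a}$. Strictly, the class attached to the new $\ker(f)$-relation combines $\log s_{i_aj_a}$ with the $\log s_{i_mj_m}$ rather than being $t_{i_aj_a}$ alone (compare $t-t'$ in Example~\ref{example}), so the pairs should be chosen so that these combined classes form a basis. With that reading only your rank-$0$ branch is needed, and Remark~\ref{Contribution2} can be bypassed, since $v$ elements generating a quotient $\Gamma_v/\Gamma_0$ of rank $v$ are automatically multiplicatively independent.
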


\begin{proof} The classes of the $t_{i_lj_l}$ form a basis of the sub $\QQ$--vector space $ \langle t_{ij}\rangle_{ (i,j) \in \mathrm{NoLB}}$ of $\CC / 2\pi\ii \QQ,$ which is the NoLB-contribution to the quotient torus $Z(1)/Z'(1).$ Hence their classes are $\QQ$--linearly independent modulo $2 \pi \rmi \QQ$ and the rank of the multiplicative group $ \langle \rme^{t_{i_lj_l}}\rangle_{ l=1,\dots,s}$ is $s.$ 
	
	  Since the $\gamma_1, \dots, \gamma_u$ form a basis of $Z'(1),$
each \(\beta_{i_aj_a}+\beta_{i_aj_a}^t\) is a $\QQ$--linear combination of the
\(\gamma_m\)'s. Thus, modulo isogeny, \(R_{i_aj_a}\) lies over a fibre whose
LB-contribution has already been accounted for by the chosen
points \(R_{i_mj_m}\). Hence the point $R_{i_aj_a}$ can only contribute to the
quotient $Z(1)/Z'(1).$

	Consider the 1-motives
\[
M_{0} := \oplus_{m=1}^u M_{j_mj_m}
\quad \text{and} \quad
M_h := M_0 \oplus (\oplus_{a =1}^h M_{i_a j_a}) \quad \text{for} \; h = 1,\dots, v.
\]
Let $B_h$, $Z'_h(1)$ and $Z_h(1)$ be the pure motives underlying $\Lie \UR (M_h)$  for $h= 0, \dots, u.$ Clearly 
$	Z'_{h}(1) = Z'(1)$  for $h=0, \dots,v$ and $\dim Z_0(1)/Z_0'(1)=0.$ Since the classes of the complex numbers $t_{i_aj_a},$ for $a=1, \dots, v$, are a 
basis of the $\QQ$--vector space
$\sum_{(i,j)\in \mathrm{LB}}
\alpha_{ij}\log(s_{ij}),$  the dimension of $Z_v(1)/Z_v'(1)$ is the LB-contribution to the quotient torus $Z(1)/Z'(1).$

Consider the following multiplicative subgroups of $G(\mathbb{C})$:
\[
\Gamma_{0} := \langle R_{i_mj_m} \rangle_{m=1, \dots,u}
\quad \text{and} \quad
\Gamma_h := \langle R_{i_mj_m}, R_{ij} \rangle_{\stackrel{ m=1, \dots,u}{(i,j) \in \{(i_1,j_1),\dots,(i_h,j_h)\}}}  \quad \text{for} \; h = 1,\dots,v.
\]
Note that $ \mathrm{rank} (\Gamma_v /\Gamma_0 )=  \mathrm{rank}\langle  R_{i_aj_a}\rangle_{ a=1,\dots,v}$ by Remark \ref{Contribution2}.

For $h=1$, the class of \(t_{i_1j_1}\) was chosen as the first element
of a \(\QQ\)-basis of the LB-contribution to
\(Z(1)/Z'(1)\). Hence, recalling that $\dim Z_0(1)/Z_0'(1)=0 $ and Remark \ref{Contribution2},
we have
\[
\dim Z_1(1)/Z'_1(1) =1=\operatorname{rank}(\Gamma_1/\Gamma_0).
\]
We now prove that for each $h=2,\dots,v$,
\begin{equation}
	\label{eq:increment2}
	\dim Z_h(1)/Z'_h(1) - \dim Z_{h-1}/ Z'_{h-1}(1)
	=
	\operatorname{rank}(\Gamma_h / \Gamma_{h-1}) =1.
\end{equation}

If $\operatorname{rank}(\Gamma_h / \Gamma_{h-1}) = 0$, then $R_{i_h j_h}$ is multiplicatively dependent on $\Gamma_{h-1}$, and so the 1-motive $M_{i_h j_h}$ belongs to the tannakian category generated by $M_{h-1}$. Therefore $M_h$ and $M_{h-1}$ generate the same tannakian category, and
\[
	\dim Z_h(1)/Z'_h(1) = \dim Z_{h-1}(1)/ Z'_{h-1}(1)
\]
This contradicts the choice of $
t_{i_hj_h} \notin 	\left\langle	t_{i_1j_1},\ldots, t_{i_{h-1}, j_{h-1}} \right\rangle_{\QQ}.$
 Hence
$
\operatorname{rank}(\Gamma_h/\Gamma_{h-1})\neq0. $

If $\operatorname{rank}(\Gamma_h / \Gamma_{h-1}) = 1$, then $R_{i_h j_h}$ is multiplicatively independent modulo $\Gamma_{h-1}$. In this case, the 1-motive $M_{h-1}$ is a quotient of $M_h$, and so we have an inclusion of motivic Galois groups $\Galmot(M_{h-1}) \hookrightarrow  \Galmot(M_h).$ 
	Since the equality $	\dim Z_h(1)/Z'_h(1) = \dim Z_{h-1} (1)/ Z'_{h-1}(1)$ is impossible with our hypothesis on the $t_{i_aj_a}$, 
\[
\dim Z_h(1)/Z'_h(1) = \dim Z_{h-1}(1)/ Z'_{h-1}(1) +1.
\]
This proves \eqref{eq:increment2}.

	Summing over $h=2,\dots,v$ the equalities \eqref{eq:increment2}, and using the case $h=1$,  we obtain
\[
\dim Z(1)/Z'(1) - \mathrm{rank}  \langle \rme^{t_{i_lj_l}}\rangle_{ l=1,\dots,s} = \dim Z_v(1)/Z_v'(1)  
=
\operatorname{rank}(\Gamma_v / \Gamma_0 ) =v.
\]
\end{proof}

Now we verify that every point \(R_{ij}\) \eqref{R} defining $M$ belongs to the multiplicative
subgroup of $G(\CC)$ generated by the points
$R_{i_mj_m},R_{i_aj_a}$ and $ \rme^{t_{i_lj_l}}.$

\begin{theorem}\label{teo:GeneralPoints}
		Let $M=[u:\ZZ \rightarrow  G^n],
	u(1)=(R_1, \dots, R_n ) \in G^n(\CC),$ be the 1-motive \eqref{GPC-1-motive} defined by the complex numbers $q_j,p_i,t_{ij}$ \eqref{Points}. We have the equalities
	\begin{align*}
	 \langle  R_{ij}\rangle_{  (i,j) \in \mathrm{LB} }&= \langle  R_{i_mj_m},  R_{i_aj_a}\rangle_{ m=1,\dots,u \atop a=1,\dots,v} ,\\
	  \langle  R_{ij}\rangle_{  (i,j) \in \mathrm{NoLB} }& \cong  \langle \rme^{t_{i_lj_l}}\rangle_{ l=1,\dots,s},\\
	  \langle  R_{ij} \rangle_{ i=1, \dots, n \atop j= 1, \dots, r }&=   \langle  R_{ij}\rangle_{  (i,j) \in \mathrm{LB} } \cdot  \langle  R_{ij}\rangle_{  (i,j) \in \mathrm{NoLB} }.
	\end{align*}
	Moreover,
	\begin{align*}
		\mathrm{rank}  \langle  R_{ij}\rangle_{  (i,j) \in \mathrm{LB} }&= 	\mathrm{rank} \langle  R_{i_mj_m}\rangle_{ m=1,\dots,u} +	\mathrm{rank} \langle  R_{i_aj_a}\rangle_{ a=1,\dots,v} ,\\
		\mathrm{rank}  \langle  R_{ij}\rangle_{  (i,j) \in \mathrm{NoLB} }&=\mathrm{rank}  \langle \rme^{t_{i_lj_l}}\rangle_{ l=1,\dots,s},\\
		\mathrm{rank}  \langle  R_{ij}\rangle_{ i=1, \dots, n \atop j= 1, \dots, r }&= 	\mathrm{rank}  \langle  R_{ij}\rangle_{  (i,j) \in \mathrm{LB} } + \mathrm{rank}  \langle  R_{ij}\rangle_{  (i,j) \in \mathrm{NoLB} }.
	\end{align*}
\end{theorem}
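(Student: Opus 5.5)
The plan is to prove the three group equalities first and then read off the rank identities. The ranks follow from the equalities together with the independence results already established: Proposition \ref{dimZ'(1)} for the $R_{i_mj_m}$, Proposition \ref{dimZ(1)/Z'(1)} for the $R_{i_aj_a}$ and the $\rme^{t_{i_lj_l}}$, and Remark \ref{Contribution2} for the joint independence of the two LB-families.

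The third equality is formal. By definition $\langle R_{ij}\rangle_{i,j}$ is generated by all $R_{ij}$, and $\{1,\dots,n\}\times\{1,\dots,r\}$ is the disjoint union of $\mathrm{LB}$ and $\mathrm{NoLB}$. Its rank statement is the real content: $\langle R_{ij}\rangle_{\mathrm{LB}}\cap\langle R_{ij}\rangle_{\mathrm{NoLB}}$ must be torsion. I would argue this by projecting to the toric and abelian data. Points of $\mathrm{NoLB}$ lie on split fibres, $R_{ij}=(P_i,Q_j,\rme^{t_{ij}})$, and in the biextension description they are only detected through $Z(1)/Z'(1)$ via the classes $t_{ij}$ modulo $2\pi\rmi\QQ$. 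A relation mixing the two families would force, by \eqref{def:R'in<R>}, a $\QQ$-relation between LB-logarithms and the $t_{ij}$ with $(i,j)\in\mathrm{NoLB}$. By Remark \ref{Contribution} and the additivity of the two contributions in \eqref{dimB-Z'-Z/Z'}, such a relation would lower $\dim Z(1)/Z'(1)$, which is impossible.

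For the second equality, note that for $(i,j)\in\mathrm{NoLB}$ the fibre $(G_j)_{P_i}$ is canonically $\cE\times\cE^*\times\GG_m$. So $R_{ij}\mapsto \rme^{t_{ij}}$, possibly after doubling in case (d.1), identifies the NoLB part modulo torsion and abelian components with $\langle \rme^{t_{ij}}\rangle_{\mathrm{NoLB}}$. The abelian components $P_i$ are torsion or already accounted for in $B$. Since the $t_{i_lj_l}$ form a $\QQ$-basis of $\langle t_{ij}\rangle_{\mathrm{NoLB}}$ modulo $2\pi\rmi\QQ$, every $\rme^{t_{ij}}$ has a power lying in $\langle \rme^{t_{i_lj_l}}\rangle$ up to roots of unity. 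This gives the isomorphism ``$\cong$'' up to finite index, which is all the rank statement needs.

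The first equality is the main obstacle. The inclusion $\supseteq$ is trivial. For $\subseteq$, take $(i,j)\in\mathrm{LB}$ outside the distinguished pairs and show that some power $R_{ij}^b$ lies in $\langle R_{i_mj_m},R_{i_aj_a}\rangle$. I would use the ``growth'' argument already used in the Propositions. Adjoin $M_{ij}$ to $M_v=\oplus_m M_{i_mj_m}\oplus\oplus_a M_{i_aj_a}$. Because the $\gamma_m$ span $\langle\beta+\beta^t\rangle_{\mathrm{LB}}$ and the $t_{i_aj_a}$ span the LB-contribution to $Z(1)/Z'(1)$, the formulas \eqref{dimB-Z'-Z/Z'} show that $Z'(1)$ and $Z(1)$ do not grow. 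Concretely:
\begin{itemize}
\item Write $\beta_{ij}+\beta_{ij}^t=\sum c_m\gamma_m$ with $c_m\in\QQ$. By biadditivity of the Poincar\'e biextension, this gives $bp_i\equiv$ a $\ZZ$-combination of the $p_{i_m}$ modulo $\Omega$, and the corresponding fibre relation.
\item The leftover toric coordinate, namely $bt_{ij}$ minus the combination of the $t_{i_mj_m}$, is a class in the LB-part of $Z(1)/Z'(1)$. It is therefore a $\QQ$-combination of the $t_{i_aj_a}$ modulo $2\pi\rmi\QQ$.
\end{itemize}
After clearing denominators this yields exactly the relations \eqref{def:R'in<R>}, so $R_{ij}^b$ lies in the group generated by the distinguished points up to torsion.

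The delicate step is making the biadditivity decomposition explicit in coordinates $\exp_{G}(p,t)$. I would follow the explicit factor of automorphy of \cite[(2.8.4)]{B03}, as used in Example \ref{example}. The ranks then add by Remark \ref{Contribution2}.
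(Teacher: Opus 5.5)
The formal parts are fine: the third group equality, and $\supseteq$ in the first. However, each of the three places where you pass between multiplicative relations in $G(\CC)=\prod_j G_j(\CC)$ and the motivic data ($\beta_{ij}+\beta_{ij}^t$ and the classes of the $t_{ij}$) breaks down.

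\emph{NoLB.} The map $R_{ij}\mapsto \rme^{t_{ij}}$ throws away the abelian component. In cases (b) and (d.1) that component $P_i$ is non-torsion, and being ``already accounted for in $B$'' has no bearing on ranks in $G(\CC)$. Take $n=r=1$, $Q_1$ torsion, $P_1$ non-torsion and $t_{11}=0$. Then $\mathrm{NoLB}=\{(1,1)\}$ and $s=0$, yet $R_{11}$ maps to $P_1\in\cE(\CC)$, so $\rank\langle R_{11}\rangle=1$. This is why Remark \ref{RemarkSplit} counts $\rank\langle \rme^{t_j}\rangle$ and not the rank of the points.

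\emph{LB.} A relation $\beta_{ij}+\beta_{ij}^t=\sum_m c_m\gamma_m$ in $\mathrm{Hom}_\QQ(B,B^*)$ does not yield, by biadditivity, $bp_i\equiv$ a combination of the $p_{i_m}$, nor any relation in $G(\CC)$. The group law of $G_j$ only realizes the partial law of $\cP$ in the first variable with $Q_j$ fixed. Relations among the symmetrized $\beta$'s can instead move the index $j$ or come from complex multiplication. Take $n=1$, $r=2$, $q_2=q_1$, $P_1$ $k$-independent of $Q_1$, and $t_{12}=t_{11}$. Then $\gamma_1^*=\gamma_2^*$ on $B$, so $u=1$. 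The only LB-class, that of $t_{11}-t_{12}$ (as in Example \ref{example}), vanishes, so $v=0$. But $R_{11}\in G_1$ and $R_{12}\in G_2$ lie in different factors and have infinite order, so $\rank\langle R_{11},R_{12}\rangle=2$. For a failure inside a single factor, take $k=\QQ(\rmi)$, $P_1=[1+\rmi]Q_1$, $P_2=Q_1$. This gives two LB-pairs with $\beta_{11}+\beta_{11}^t=\beta_{21}+\beta_{21}^t$ ($\phi+\overline{\phi}=2$ in both cases), although $P_1,P_2$ are $\ZZ$-independent.

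\emph{Intersection.} Your argument that $\langle R_{ij}\rangle_{\mathrm{LB}}\cap\langle R_{ij}\rangle_{\mathrm{NoLB}}$ is torsion is circular: the formula for $\dim Z(1)/Z'(1)$ computes a dimension from the data, it does not forbid relations. The claim is also false. Take $r=1$, $n=3$, $P_1=P_2$ non-torsion and $k$-independent of $Q_1$, $t_{11}-t_{21}\notin 2\pi\rmi\QQ$, and $P_3$ torsion of order $N$. Choose $t_{31}$ with $R_{31}^N=R_{11}R_{21}^{-1}$, which is possible since both sides lie in $\GG_m\subset G_1$. The total rank is then $2$, while the LB- and NoLB-ranks are $2$ and $1$.

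So these are not gaps of detail. In the three examples the second, first and third rank identities of the statement fail, and no completion of your sketch can prove it as written.

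The paper argues differently from your coordinate computation. It adjoins $M_{ij}$ to $M_0$ and asserts that independence of $R_{ij}$ modulo $\Gamma_0$ must enlarge $Z'(1)$ or the LB-part of $Z(1)/Z'(1)$; for NoLB it asserts that ``the abelian part is already fixed''. It relies on the same unjustified identification of multiplicative rank in $G(\CC)$ with toric dimension, so it does not supply the missing idea. In the $q_2=q_1$ example, $M_{11}$ and $M_{11}\oplus M_{12}$ generate the same tannakian category even though $R_{12}$ is independent of $R_{11}$.

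Finally, both arguments yield at best multiplicative dependence, i.e.\ equalities up to finite index, not the equalities of groups as stated.
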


\begin{proof} 
	
	For every pair $
	(i,j)\in
	\mathrm{LB}\setminus
	(	\{(i_m,j_m)\}_{m=1,\ldots,u}
	\cup
	\{(i_a,j_a)\}_{a=1,\ldots,v}),
	$
	we show that
	\[
	R_{ij}
	\text{ is multiplicatively dependent on }
	\langle
	R_{i_mj_m},
	R_{i_aj_a}
	\rangle_{m,a}.
	\]
	The multiplicative independence of the distinguished points
	\(R_{i_mj_m}\) and \(R_{i_aj_a}\) has already been established in
	Propositions~\ref{dimZ'(1)} and~\ref{dimZ(1)/Z'(1)}. Fix a pair $(i,j)\in
	\mathrm{LB}\setminus
	(	\{(i_m,j_m)\}_{m=1,\ldots,u}
	\cup
	\{(i_a,j_a)\}_{a=1,\ldots,v}),$
 and consider the 1-motives
\[
M_{0} := (\oplus_{m=1}^u M_{i_mj_m}) \oplus (\oplus_{a=1}^v M_{j_aj_a})
\qquad \text{and} \qquad
M_1 := M_0 \oplus M_{ij} 
\]
Let $B_h$, $Z'_h(1)$ and $Z_h(1)$ be the pure motives underlying $\Lie \UR (M_h)$  for $h= 0,1.$
By Notation~\ref{Notation}(2), we have chosen the pairs
$(i_m,j_m), m=1,\ldots,n'r',$
so that $i_m=1,\ldots,n'$ and $j_m=1,\ldots,r'$
where $ p_1,\ldots,p_{n'},q_1,\ldots,q_{r'} $
form a \(k\)-basis of the sub \(k\)-vector space
$\langle p_i,q_j\rangle_{i,j}
\subset
\CC/(\Omega\otimes_{\ZZ}\QQ)
$
generated by the classes of
\(p_1,\ldots,p_n,q_1,\ldots,q_r\).
Therefore 
$B_h=B$ for any $h.$
 Since $\gamma_1, \dots ,\gamma_{u}$
is a basis of the $\QQ $--vector space $ \langle \beta_{i,j}+\beta_{i,j}^t \rangle_{  (i,j) \in \mathrm{LB}},$ $Z'_{0}(1) = Z'(1) $ and $\dim Z'_{0}(1)=u.$
Moreover the classes of the complex numbers $t_{i_aj_a},$ $a=1, \dots, v$, are a 
basis of the $\QQ$--vector space
$\sum_{(i,j)\in \mathrm{LB}}
\alpha_{ij}\log(s_{ij}).$ Hence the dimension of $Z_0(1)/Z_0'(1)$ is the LB-contribution to the quotient torus $Z(1)/Z'(1)$ and $\dim Z_0(1)/Z_0'(1)=v.$

Consider the following multiplicative subgroups of $G(\mathbb{C})$:
\[
\Gamma_{0} := \langle R_{i_mj_m}, R_{i_aj_a} \rangle_{m,a}
\qquad \text{and} \qquad
\Gamma_1 := \langle R_{i_mj_m}, R_{i_aj_a}, R_{ij} \rangle_{ m,a}.
\]

Assume by contradiction that $
\operatorname{rank}(\Gamma_1/\Gamma_0)=1.$
Then \(R_{ij}\) is multiplicatively independent modulo \(\Gamma_0\).
The 1-motive $M_{0}$ is then a quotient of $M_1$, and so we have an inclusion of motivic Galois groups $\Galmot(M_{0}) \hookrightarrow  \Galmot(M_1).$ 
Since \(B_h=B\) for \(h=0,1\), this independence can only give a new toric contribution either to \(Z'_1(1)\) or to the LB-contribution to
\(Z_1(1)/Z'_1(1)\).
The first possibility would imply
\[
\dim Z'_1(1)=\dim Z'_0(1)+1=u+1,
\]
which is impossible because
$
\beta_{ij}+\beta_{ij}^t
\in
\left\langle
\beta_{i_mj_m}+\beta_{i_mj_m}^t
\right\rangle_{m=1,\ldots,u}.
$
The second possibility would imply
\[
\dim Z_1(1)/Z'_1(1)
=
\dim Z_0(1)/Z'_0(1)+1
=
v+1,
\]
which is impossible because the class of \(t_{ij}\) belongs to the
\(\QQ\)-vector space generated by the classes of
\(t_{i_aj_a}\), \(a=1,\ldots,v\).
Therefore $	\operatorname{rank}(\Gamma_1/\Gamma_0)=0,$
and \(R_{ij}\) is multiplicatively dependent on \(\Gamma_0\).
Repeating the above argument for every pair
$	(i,j)\in
\mathrm{LB}\setminus
	(	\{(i_m,j_m)\}_{m=1,\ldots,u}
\cup
\{(i_a,j_a)\}_{a=1,\ldots,v}),$
we conclude that every remaining point \(R_{ij}\) is multiplicatively
dependent on the subgroup
$
\left\langle R_{i_mj_m}, R_{i_aj_a} \right\rangle_{m,a}.$ Recalling Remark \ref{Contribution2}
 we get the theorem for the LB-contribution.

	We now consider the pairs $(i,j)$ belonging to NoLB. Since the abelian part is already fixed, the points
	\(R_{i_lj_l}\) differ only by their toric
	coordinates. Hence the multiplicative subgroup of $G(\CC)$ generated by the points
	\(R_{i_lj_l}\) is isomorphic to the multiplicative subgroup of
	\(\GG_m(\CC)\) generated by the points \(e^{t_{i_lj_l}}\).
	Consequently,
	\[
	\rank\langle R_{i_lj_l}\rangle_{l=1,\ldots,s}
	=
	\rank\langle e^{t_{i_lj_l}}\rangle_{l=1,\ldots,s}.
	\]
	For every pair $(i,j)\in
	\mathrm{NoLB}\setminus
	\{(i_l,j_l)\}_{l=1,\ldots,s}$ 
	we prove that
	\[
	R_{ij}
	\text{ is multiplicatively dependent on }
	\langle
	R_{i_lj_l}
	\rangle_{l=1,\ldots,s}.
	\]
The multiplicative independence of the distinguished points
\(R_{i_lj_l}\) has already been established in Proposition \ref{dimZ(1)/Z'(1)}.
				Fix a pair $(i,j)\in
		\mathrm{NoLB}\setminus
		\{(i_l,j_l)\}_{l=1,\ldots,s}$ and consider the 1-motives
		\[
		M_{0} := [0 \to B] \oplus (\oplus_{l=1}^s M_{j_lj_l})
		\qquad \text{and} \qquad
		M_1 := M_0 \oplus M_{ij} 
		\]
		Let $B_h$, $Z'_h(1)$ and $Z_h(1)$ be the pure motives underlying $\Lie \UR (M_h)$  for $h= 0,1.$ Clearly 
		$ B_h=B	.$  Since \((i_l,j_l)\in\mathrm{NoLB}\) and \((i,j)\in\mathrm{NoLB}\),
		we have $ \dim	Z'_0(1)= \dim Z'_1(1)=0.$
		Moreover, the classes of \(t_{i_lj_l}\), \(l=1,\ldots,s\), form a
		\(\QQ\)-basis of $
		\langle t_{ij}\rangle_{(i,j)\in\mathrm{NoLB}}
		\subset \CC/2\pi\ii\QQ.$
		Hence $
		\dim Z_0(1)=\dim Z_1(1)=s.$
		
		Consider the following multiplicative subgroups of $G(\mathbb{C})$:
		\[
		\Gamma_{0} := \langle R_{i_lj_l} \rangle_{l=1, \dots,s}
		\qquad \text{and} \qquad
		\Gamma_1 := \langle R_{i_lj_l}, R_{ij} \rangle_{ l=1, \dots,s}.
		\]
		
		Assume by contradiction that $
		\operatorname{rank}(\Gamma_1/\Gamma_0)=1.$
		Then \(R_{ij}\) is multiplicatively independent modulo \(\Gamma_0\).
		The 1-motive $M_{0}$ is then a quotient of $M_1$, and so we have an inclusion of motivic Galois groups $\Galmot(M_{0}) \hookrightarrow  \Galmot(M_1).$ 
		Since the abelian part is fixed and the Lie bracket vanishes for
		NoLB-pairs, this independence can only produce a new toric contribution to
		\(Z_1(1)/Z'_1(1)\). Thus
		\[
		\dim Z_1(1)/Z'_1(1)
		=
		\dim Z_0(1)/Z'_0(1)+1
		=
		s+1.
		\]
		This contradicts the fact that the class of \(t_{ij}\) belongs to
		$	\left\langle t_{i_lj_l}\right\rangle_{l=1,\ldots,s}.$
		Therefore $	\operatorname{rank}(\Gamma_1/\Gamma_0)=0,$
		and \(R_{ij}\) is multiplicatively dependent on \(\Gamma_0\).
	Repeating the above argument for every pair
		$	(i,j)\in
		\mathrm{NoLB}\setminus
		\{(i_l,j_l)\}_{l=1,\ldots,s},$
		we conclude that every remaining point \(R_{ij}\) is multiplicatively
		dependent on the subgroup
		$
		\left\langle R_{i_lj_l}\right\rangle_{l=1,\ldots,s},$
		which proves the theorem for the NoLB-contribution.

		Finally, the sets LB and NoLB form a partition of $\{1, \dots,n\} \times \{1 ,\dots ,r\}.$ Hence every pair $(i,j)$ belongs to exactly one of them. Moreover, every LB-point is generated by the distinguished LB-points $R_{i_mj_m},R_{i_aj_a},$ and every NoLB-point is generated by the distinguished NoLB-points $R_{i_lj_l}.$ These distinguished generators are mutually independent by Remarks \ref{Contribution}, \ref{Contribution2}.  This proves the last equality of the theorem and,
		passing to ranks, the last equality concerning the multiplicative
		ranks.
\end{proof}

\begin{proof}[Proof of Theorem \ref{Teo:dimGal(M)}] Without loss of generality we may assume the field of definition $K$ \eqref{FieldDefinition} of the 1-motive $M$ to be algebraically closed. By \cite[Theorem 1.2.1]{A19}
	the motivic Galois group of $M$ and its Mumford-Tate group coincide. Hence \cite[Lemma 3.5]{BPSS} implies that $\dim \UR(M)= 2 \dim B + \dim Z(1).$ Recalling that the dimension of the motivic Galois group of an elliptic curve $\cE$ is $\frac{4}{\dim_\QQ k},$ from the short exact sequence \eqref{eq:shortexactsequenceUR},
Proposition \ref{dimZ'(1)}, Proposition \ref{dimZ(1)/Z'(1)} and Theorem \ref{teo:GeneralPoints}, we get
\[\dim \Galmot(M) \]
\begin{align*}
	&= \frac{4}{\dim_{\QQ}k}
	+
	2\dim_k\langle p_i,q_j\rangle_{i=1, \dots,n \atop j=1, \dots, r}
	+
\overbrace{\rank\langle R_{i_mj_m}\rangle_{m=1,\dots,u}}^{\dim Z'(1)}
+
\overbrace{
	\rank\langle R_{i_aj_a}\rangle_{a=1,\dots,v}
	+
	\rank\langle e^{t_{i_lj_l}}\rangle_{l=1,\dots,s}
}^{\dim Z(1)/Z'(1)}
\\
&= \frac{4}{\dim_{\QQ}k}
+
2\dim_k\langle p_i,q_j\rangle_{i=1, \dots,n \atop j=1, \dots, r}
+
\underbrace{
	\rank\langle R_{i_mj_m}\rangle_{m=1,\dots,u}
	+
	\rank\langle R_{i_aj_a}\rangle_{a=1,\dots,v}
}_{\rank\langle R_{ij}\rangle_{(i,j)\in \mathrm{LB}}}
+
\underbrace{
	\rank\langle e^{t_{i_lj_l}}\rangle_{l=1,\dots,s}
}_{\rank\langle R_{ij}\rangle_{(i,j)\in \mathrm{NoLB}}},\\
&=
\frac{4}{\dim_{\QQ}k}
+
2\dim_k\langle p_i,q_j\rangle_{i=1, \dots,n \atop j=1, \dots, r}
+ 
\rank\langle R_{ij}\rangle_{i=1, \dots,n \atop j=1, \dots, r}.\\
\end{align*}
\end{proof}

The cases $n=0$ and $r=0$ are dual to each other via Cartier duality for 1-motives. If $r=0$ (equivalently, $q_j \in \Omega \otimes_\mathbb{Z}\mathbb{Q}$ for all $j$), then the set $\mathrm{LB}$ is empty, and the above Theorem reduces to \eqref{DimSplit}.

%-------------------------------------------
\bibliographystyle{plain}

\end{document}